\title{Units of hyperelliptic curves over $\F_2$}
\author{Justin Chen}
\address{\rm School of Mathematics, Georgia Institute of Technology, Atlanta, Georgia, USA}
\email{jchen646@gatech.edu}
\author{Vishal Muthuvel}
\address{\rm Columbia University in the City of New York, New York, New York, USA}
\email{vm2696@columbia.edu}
\date{}
\numberwithin{equation}{section}
\newtheorem{theorem}{Theorem}[section]
\newtheorem{proposition}[theorem]{Proposition}
\newtheorem{lemma}[theorem]{Lemma}
\newtheorem{corollary}[theorem]{Corollary}
\newtheorem{conjecture}[theorem]{Conjecture}
\theoremstyle{definition}
\newtheorem{definition}[theorem]{Definition}
\newtheorem{remark}[theorem]{Remark}
\newtheorem{example}[theorem]{Example}
\newtheorem{discussion}[theorem]{Discussion}
\DeclareMathOperator{\N}{\mathbb{N}}
\DeclareMathOperator{\F}{\mathbb{F}}
\DeclareMathOperator{\Z}{\mathbb{Z}}
\DeclareMathOperator{\m}{\mathfrak{m}}
\DeclareMathOperator{\Ass}{Ass}
\DeclareMathOperator{\T}{\mathcal{T}}
\DeclareMathOperator{\Aut}{Aut}
\subjclass[2010]{13P15, 16U60, 11R27}
\keywords{unit group, hyperelliptic curves, quadratic forms}
\begin{document}

\vspace*{-1cm}
\maketitle

\begin{abstract}
We study unit groups of rings of the form $\F_2[x,y]/(y^2 + gy + h)$, for $g, h \in \F_2[x]$ -- in particular, the question of (non)triviality of such unit groups.
Up to automorphisms of $\F_2[x,y]$ we classify such rings into 3 distinct types.
For 2 of the types we show that the unit group is always trivial, and conjecture that the unit group is always nontrivial for the 3rd type.
We provide support for this conjecture both theoretically and computationally, via an algorithm that has been used to compute units in large degrees.
\end{abstract}

\section{Introduction}
Let $R$ be a ring (commutative with $1 \ne 0$), with group of units $R^\times$.
A natural question regarding units is the following:
\begin{equation} \label{eq:mainQ} \tag{1}
\text{What are the rings } R \text{ with } R^\times = \{ 1 \}?
\end{equation}
In other words, what rings have trivial unit group?

In general, there is little hope for a full classification of such rings.
In addition, explicit examples are rare.
Nevertheless, for a restricted class of rings one may hope to identify which rings in the class have trivial units.
In this paper, we examine rings of the form 
\[
\F_2[x,y]/(y^2 + gy + h)
\]
where $g, h \in \F_2[x]$.
Geometrically, these are coordinate rings of affine plane curves over $\F_2$ that admit a $2$-to-$1$ map to a line.

We start in \Cref{sec:motivation} by motivating the study of this class of rings with regard to Question \ref{eq:mainQ}.
In \Cref{sec:orbits} we consider the action of automorphisms of the ambient polynomial ring on these curves, and classify minimal representatives for the orbits to be one of 3 types.
We show in \Cref{sec:eqns} that the problem of determining units in these rings is equivalent to representing $1$ by a binary quadratic form over $\F_2[x]$.
Via a simple degree argument, we show that 2 of the 3 types of minimal representatives always have trivial unit group.
This provides a rich source of (previously unknown) examples of rings with trivial unit group.
Finally, we conjecture a full answer to Question \ref{eq:mainQ} for this class of rings, supported by ample computational evidence.

\section{Rings with trivial unit group} \label{sec:motivation}
We begin by identifying some necessary conditions for a ring to have trivial unit group.
This allows us to find meaningful restrictions to place on a class of rings, for the purposes of Question \ref{eq:mainQ}. 
(As we will see, the number $2$ will be of special importance, appearing in various assumptions/conditions.)

To this end, let $R$ be a ring with trivial unit group.
First, note that $R$ has characteristic $2$, as $-1 = 1$ in $R$.
Next, $R$ has trivial Jacobson radical, i.e. the intersection of all maximal ideals of $R$ equals zero.
This follows from an elementwise characterization of the Jacobson radical: $r \in R$ is in the Jacobson radical if and only if $\{ 1 + ar \mid a \in R \} \subseteq R^\times$.
In particular the nilradical is also trivial, i.e. there are no nonzero nilpotent elements (which can also be seen directly from the fact that $1 + r$ is a unit for any nilpotent $r$).

Thus $R$ is a reduced $\F_2$-algebra, i.e. $R = \F_2[X]/I$, for some set of indeterminates $X$ and radical ideal $I \subseteq \F_2[X]$.
To proceed, we now consider restrictions on $R$, which will take the form of hypotheses on $|X|$ and $I$.
For $|X|$: we assume that the number of indeterminates is $2$, which is the smallest case that does not admit an immediate characterization (if $X = \{x\}$, then the only possibilities for trivial unit group are $I = 0, (x), (x + 1)$).
We may then write $R = \F_2[x,y]/I$ for some radical ideal $I \subseteq \F_2[x,y]$.

Since $\F_2[x,y]$ has dimension 2, the codimension of $I$ is either $0, 1$, or $2$.
If $I$ has codimension $0$, then $I = 0$, and $\F_2[x,y]/I = \F_2[x,y]$ has trivial unit group.
We next treat the case of codimension $2$, i.e. maximal ideals:

\begin{proposition} \label{prop:maxIdeals}
Set $\T := \{ ( x,y),( x+1,y),( x,y+1), ( x+1,y+1) \}$, and let $I, J \subseteq \F_2[x,y]$ be radical ideals. 
\begin{enumerate}
\item $\T$ is precisely the set of maximal ideals $\mathfrak{m} \subseteq \mathbb{F}_2[x,y]$ for which $\mathbb{F}_2[x,y]/\mathfrak{m}$ has trivial unit group.
\item If there is a maximal ideal $\m \in \Ass(\F_2[x,y]/I) \setminus \T$, then $(\F_2[x,y]/I)^\times \ne \{ 1 \}$.
\item Suppose $\Ass(\F_2[x,y]/I) \cap \T = \emptyset$ and $\Ass(\F_2[x,y]/J) \subseteq \T$. Then $$(\F_2[x,y]/I)^\times = \{ 1 \} \iff (\F_2[x,y]/(I \cap J))^\times = \{ 1 \}.$$
\end{enumerate}
\end{proposition}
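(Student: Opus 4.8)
The plan is to realize $\F_2[x,y]/(I\cap J)$ as a fibre product and then reduce the whole statement to a short argument about $2$-power torsion. Write $R := \F_2[x,y]$, $A := R/I$, $B := R/(I\cap J)$, $D := R/J$, and $C := R/(I+J)$, with canonical surjections $\phi\colon A\to C$ and $\psi\colon D\to C$. The standard pullback description of an intersection of ideals gives a ring isomorphism
\[
B \;\cong\; A\times_C D \;=\; \{(a,d)\in A\times D : \phi(a)=\psi(d)\}.
\]
Because $\phi,\psi$ are surjective, a pair $(a,d)\in B$ is a unit exactly when $a\in A^\times$ and $d\in D^\times$ (the inverse is then $(a^{-1},d^{-1})$, which again satisfies the matching condition $\phi(a)^{-1}=\psi(d)^{-1}$). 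So the first step is to pin down $D^\times$. Here I would combine the hypothesis $\Ass(D)\subseteq\T$ with radicality of $J$: these force $J$ to be an intersection of finitely many of the four maximal ideals in $\T$, so by CRT and part (1) we get $D\cong \F_2^{\,|\Ass(D)|}$ and hence $D^\times=\{1\}$. Consequently the matching condition collapses to $\phi(a)=1$, giving
\[
B^\times \;\cong\; \{\, a\in A^\times : \phi(a)=1 \text{ in } C \,\},
\]
a subgroup of $A^\times$.

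With this identification the forward implication is immediate: if $A^\times=\{1\}$, the displayed subgroup is trivial, so $B^\times=\{1\}$. The content is in the reverse implication, which I would prove in contrapositive form, producing from a nontrivial unit of $A$ a nontrivial unit that maps to $1$ in $C$. The key structural input is the nature of $C$: since $I+J\supseteq J$, the support $V(I+J)=V(I)\cap V(J)$ lies inside $V(J)\subseteq\T$, so $C$ is Artinian and is a product of local $\F_2$-algebras each with residue field $\F_2$ (again invoking part (1)). For such a ring $C^\times=1+\operatorname{nil}(C)$ is a finite $2$-group, so every unit of $C$ has order a power of $2$. (If $I+J=(1)$, then $C=0$ and $B\cong A\times D$, making the equivalence immediate; I would dispatch this degenerate case first.)

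Now, given $a\in A^\times$ with $a\neq 1$, let $2^k$ be the order of $\phi(a)$ in $C^\times$. Then $a^{2^k}\in A^\times$ and $\phi(a^{2^k})=1$, so $a^{2^k}$ lies in the subgroup above; it remains to guarantee $a^{2^k}\neq 1$. This is exactly where the main obstacle sits, and it is resolved by a torsion lemma: a reduced ring of characteristic $2$ has no nontrivial $2$-power-torsion units. Indeed, if $u^2=1$ then $(u-1)^2=u^2-1=0$ in characteristic $2$, so reducedness gives $u=1$; iterating, $u^{2^k}=1$ forces $u=1$. Hence $a\neq 1$ implies $a^{2^k}\neq 1$, producing a nontrivial element of $B^\times$ and completing the reverse implication. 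The crux to get right is precisely this interplay: the power trick $a\mapsto a^{2^k}$ is what lands a unit in the kernel of $\phi$, while reducedness is what prevents that power from trivializing.
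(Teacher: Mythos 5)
Your proposal covers only part (3) of the proposition; as written it is an incomplete proof of the stated result. Part (1) is invoked twice as an input (to identify $D$ as a power of $\F_2$, and to identify the residue fields of $C$) but is never proved; it needs the short observation that a field has trivial unit group only if it equals $\F_2$, and that a maximal ideal $\m$ with $\F_2[x,y]/\m \cong \F_2$ must contain $x+a$ and $y+b$ for some $a,b \in \F_2$, hence is one of the four ideals in $\T$. More seriously, part (2) is never addressed, and it is not a formal consequence of what you prove: it requires splitting off the maximal associated prime, i.e. writing $I = I' \cap \m$ with $I' \not\subseteq \m$ (possible since $I$ is radical, so $\m$ is a minimal prime and $I'$ is the intersection of the others), noting $I' + \m = \F_2[x,y]$ by maximality, and concluding via CRT that $(\F_2[x,y]/I)^\times \cong (\F_2[x,y]/I')^\times \times (\F_2[x,y]/\m)^\times \ne \{1\}$, since $\m \notin \T$ forces $(\F_2[x,y]/\m)^\times \ne \{1\}$ by part (1). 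Your fibre-product formalism would deliver exactly this (it is the degenerate case $C = 0$ applied to the pair $(I', \m)$), but you never carry it out.

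That said, your argument for part (3) is correct and genuinely different from the paper's. The paper handles (2) and (3) with one CRT mechanism: a maximal associated prime $\m$ of a radical ideal is coprime to the intersection $I'$ of the remaining primes, so the unit group factors as $(\F_2[x,y]/I')^\times \times (\F_2[x,y]/\m)^\times$, and iterating this strips the $\T$-points off $I \cap J$; the hypothesis $\Ass(\F_2[x,y]/I) \cap \T = \emptyset$ guarantees that exactly $I$ remains, giving an isomorphism $(\F_2[x,y]/(I\cap J))^\times \cong (\F_2[x,y]/I)^\times$. You instead keep the possibly non-coprime pair $(I,J)$, use the pullback $B \cong A \times_C D$, reduce to $B^\times \cong \ker(A^\times \to C^\times)$, and transfer nontriviality by the $2$-power torsion trick: $C^\times$ is a $2$-group because $C$ is Artinian with all residue fields $\F_2$, while $A$, being reduced of characteristic $2$, has no nontrivial $2$-power-torsion units ($u^{2^k}=1 \Rightarrow (u+1)^{2^k}=0 \Rightarrow u=1$). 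All of these steps check out. Notably, you never use the hypothesis $\Ass(\F_2[x,y]/I) \cap \T = \emptyset$, so your argument proves a slightly stronger statement than (3); what it gives up is the explicit isomorphism of unit groups that the paper's method produces, yielding only the equivalence of triviality -- which is all the proposition claims.
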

\begin{proof}
If $\m \subseteq \F_2[x,y]$ is maximal, then $\F_2[x,y]/\m$ is a field, and the only field with trivial unit group is $\F_2$, which proves (1).
Next, suppose $\m \in \Ass(\F_2[x,y]/I)$ is maximal. Since $I$ is radical, we may write $I = I' \cap \m$, for some radical ideal $I' \not \subseteq \m$. Then $I' + \m = \F_2[x,y]$, i.e. $I'$ and $\m$ are coprime, so by Chinese Remainder
\[
\F_2[x,y]/I = \F_2[x,y]/(I' \cap \m) \cong \F_2[x,y]/I' \times \F_2[x,y]/\m
\]
and so $(\F_2[x,y]/I)^\times \cong (\F_2[x,y]/I')^\times \times (\F_2[x,y]/\m)^\times$. This shows that $\m \not \in \T \implies (\F_2[x,y]/I)^\times \ne \{ 1 \}$ which is (2), and if $\m \in \T$, then $(\F_2[x,y]/I)^\times \cong (\F_2[x,y]/I')^\times$, from which (3) follows.
\end{proof}

By \Cref{prop:maxIdeals}, if $I \subseteq \F_2[x,y]$ has trivial unit group, then the only possible associated primes which are maximal ideals are elements of $\T$.
Furthermore, in this case we may consider the pure codimension $1$ part of $I$ (i.e. ``remove'' all elements of $\T$ from the associated primes of $I$, e.g. by saturating $I$ by all elements of $\T$), without affecting the unit group.
Thus without loss of generality, $I$ has pure codimension $1$, i.e. all associated primes of $I$ have codimension $1$.

Now $I$, being a radical ideal in a Noetherian ring, is a finite intersection of prime ideals.
As $\F_2[x,y]$ is a UFD, every codimension $1$ prime ideal is principal and generated by an irreducible element, so
\[
I = (f_1) \cap (f_2) \cap \ldots \cap (f_m) = (f_1 f_2 \cdots f_m)
\]
for some irreducible $f_i \in \F_2[x,y]$ (so $I$ is also principal).
This gives a canonical injection of rings
\[
\F_2[x,y]/I \hookrightarrow \prod_{i=1}^n \F_2[x,y]/(f_i)
\]
which induces an injection on unit groups
\[
(\F_2[x,y]/I)^\times \hookrightarrow \left( \prod_{i=1}^n \F_2[x,y]/(f_i) \right)^\times = \prod_{i=1}^n (\F_2[x,y]/(f_i))^\times \]
so in particular, $(\F_2[x,y]/(f_i))^\times = \{ 1 \}$ for all $i \implies (\F_2[x,y]/I)^\times = \{ 1 \}$.
For simplicity, we therefore restrict our attention to the case that $I = (f_1)$ is prime.
Geometrically, $I$ is the ideal of an irreducible curve in $\mathbb{A}^2_{\F_2}$, the affine plane over $\F_2$.

\begin{remark}
It is perhaps appropriate to mention here why the affine (as opposed to projective) case is of interest.
Quite generally, if $R = \bigoplus_{i \ge 0} R_i$ is an $\N$-graded reduced ring, then every unit of $R$ is homogeneous of degree $0$, i.e. $R^\times = R_0^\times$ (this may be seen by reduction to the domain case, since minimal primes of a graded ring are graded, see also \cite[Proposition 5.3]{Che}).
In particular, if $\F_2[X]$ is a positively graded polynomial ring over $\F_2$ (i.e. $\deg x > 0$ for all $x \in X$), then any ring of the form $\F_2[X]/I$, where $I$ is a \emph{graded} radical ideal, automatically has trivial unit group.

This reasoning applies even to non-standard gradings: for example, with the non-standard grading $\deg(x) = 2$, $\deg(y) = 3$, the ring $\F_2[x,y]/(y^2 - x^3)$ is graded and reduced, hence has trivial unit group.
\end{remark}

Returning to our series of reductions: let $R = \F_2[x,y]/(f)$, for some $f \in \F_2[x,y]$ irreducible.
By switching variables if necessary, we may assume $\deg_y f \le \deg_x f$.
Our final, strongest, hypothesis is that $f$ is monic in $y$ of degree $2$, i.e. $f = y^2 + gy + h$ for some $g, h \in \F_2[x]$.
Equivalently, $\F_2[x] \subseteq R$ is a Noether normalization realizing $R$ as a free $\F_2[x]$-module of rank $2$.
In particular, we have the following normal form for elements in $R$:
\begin{equation} \label{eq:normalForm}
R = \F_2[x,y]/(y^2 + gy + h) = \{ a + by \mid a, b \in \F_2[x], y^2 = gy + h \}
\end{equation}
It is this class of rings that we now study in detail.

\section{Orbits under automorphisms of \texorpdfstring{$\mathbb{F}_2[x,y]$}{F2[x,y]}} \label{sec:orbits}

The parameter space for rings of the form $\F_2[x,y]/(y^2 + gy + h)$ is $\F_2[x] \times \F_2[x]$, whose elements are pairs $(g,h)$ of univariate polynomials over $\F_2$ (not to be confused with the fact that the ring itself is in bijection with $\F_2[x] \times \F_2[x]$).
Our first order of business is to identify a class of \emph{minimal representatives} in this parameter space.
To do this, we consider automorphisms of the ambient polynomial ring $\F_2[x,y]$.

First, note that $\F_2[x]$ has precisely one non-identity ring automorphism, which is the map $\psi : \F_2[x] \to \F_2[x], \; x \mapsto x + 1$ (we may also view this as an automorphism of $\F_2[x,y]$ by setting $\psi(y) = y$).
Next, let $\varphi : \F_2[x,y] \to \F_2[x,y]$ be defined by $\varphi(x) = y, \varphi(y) = x$, and for $f \in \F_2[x]$, define maps of the form
\begin{align} \label{def:phi}
\phi_f : \F_2[x,y] &\to \F_2[x,y] \\
x &\mapsto x \nonumber \\
y &\mapsto y + f. \nonumber
\end{align}
Note that each of $\psi, \varphi$, and $\phi_f$ are involutions, i.e. equal to their own inverse (in fact, $\phi_f \circ \phi_{f'} = \phi_{f + f'}$ for any $f, f' \in \F_2[x]$, so $f \mapsto \phi_f$ is an embedding of the additive group $(\F_2[x], +)$ into $\Aut(\F_2[x,y])$).
It follows from a classic theorem of Van der Kulk \cite{VdK} that the group of ring automorphisms of $\F_2[x,y]$ is generated by $\varphi$ and $\{ \phi_f \mid f \in \F_2[x] \}$ (e.g. $\psi = \varphi \circ \phi_1 \circ \varphi$).

Now the action of $\phi_f$ on $y^2 + gy + h$ is given by
\begin{align*}
y^2 + gy + h &\mapsto (y + f)^2 + g(y + f) + h \\
&= y^2 + gy + (h + gf + f^2)
\end{align*}
which we write symbolically as $\phi_f \cdot (g, h) = (g, h + gf + f^2)$.
This defines a group action of $(\F_2[x], +)$ on the parameter space $\F_2[x] \times \F_2[x]$, for which we may consider the orbits.

\begin{definition}
Let $(g, h) \in \F_2[x] \times \F_2[x]$.
We say that $(g, h)$ is a \emph{minimal representative} if $\deg h$ is minimal in its orbit under $(\F_2[x], +)$, i.e. for any $f \in \F_2[x]$, $\deg(h) \le \deg(h + gf + f^2)$. (We use the convention $\deg(0) := -\infty$.)
\end{definition}

Since $g$ is constant over all representatives of an orbit, we may equivalently characterize minimal representatives as pairs $(g, h)$ such that:
\begin{itemize}
\item $\deg g + \deg h$ is minimal in the orbit, or
\item $(\deg g, \deg h)$ is lexicographically least in the orbit
\end{itemize}
We can now give a classification of minimal representatives:

\begin{theorem} \label{thm:minRep}
Let $(g,h) \in \F_2[x] \times \F_2[x]$.
Then $(g,h)$ is a minimal representative if and only if one of the following conditions holds:
\begin{enumerate}
\item $2 \deg g < \deg h$ and $\deg h$ is odd
\item $2 \deg g = \deg h$
\item $\deg g > \deg h$
\end{enumerate}
\end{theorem}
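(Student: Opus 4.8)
The plan is to reduce minimality to a question about which degrees are realized by the polynomial $gf + f^2$ as $f$ varies, and then to compute that set of degrees explicitly and read off its complement.

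First I would reformulate the minimality condition. Every nonzero polynomial over $\F_2$ has leading coefficient $1$, so for $h \neq 0$ the degree of $h + gf + f^2$ is strictly smaller than $\deg h$ exactly when $\deg(gf + f^2) = \deg h$ (the two leading terms then cancel); a strictly smaller $\deg(gf + f^2)$ leaves $\deg h$ unchanged, and a strictly larger one raises it. Hence, setting $S := \{ \deg(gf + f^2) \mid f \in \F_2[x] \}$, a pair with $h \neq 0$ is a minimal representative if and only if $\deg h \notin S$. The degenerate pair $(g,0)$ is minimal unconditionally, since $\deg 0 = -\infty$ cannot be lowered, and I would check directly that it satisfies (2) when $g = 0$ and (3) when $g \neq 0$. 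So the problem reduces to computing $S$.

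To compute $S$ I would factor $gf + f^2 = f(f + g)$ and split on $\deg f$ relative to $d := \deg g$, first assuming $g \neq 0$. When $\deg f < d$ the factor $f + g$ has degree $d$, so $\deg(f(f+g)) = \deg f + d$, and letting $\deg f$ run over $0, 1, \ldots, d-1$ sweeps out precisely the interval $\{d, d+1, \ldots, 2d-1\}$. When $\deg f > d$ the square dominates, $\deg(f(f+g)) = 2\deg f$, giving exactly the even integers $\ge 2d+2$. The delicate case is $\deg f = d$: here the leading terms $x^{2d}$ of $gf$ and $f^2$ cancel in characteristic $2$, so the degree falls below $2d$ and contributes nothing new. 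This yields $S = \{d, \ldots, 2d-1\} \cup \{\text{even integers} \ge 2d+2\}$, whose complement among all degrees is $\{n < d\} \cup \{2d\} \cup \{\text{odd } n > 2d\}$ --- exactly conditions (3), (2), and (1), respectively. The remaining case $g = 0$ gives $S = \{2\deg f : f \neq 0\} = \{\text{even} \ge 0\}$, with complement the odd positive integers together with $-\infty$, again matching (1) and (2) under $\deg g = -\infty$.

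The main obstacle is the computation of $S$, and specifically pinning down the three ``gaps'' that create the trichotomy: that no degree below $d$ is attained, that the value $2d$ is skipped, and that above $2d$ only even degrees appear. All three stem from the characteristic-$2$ facts that $\deg f^2 = 2\deg f$ is always even and that the top terms cancel exactly when $\deg f = \deg g$; carefully tracking these cancellations --- and confirming surjectivity onto the interval $\{d, \ldots, 2d-1\}$ --- is where the work lies, after which the identification with (1)--(3) is immediate.
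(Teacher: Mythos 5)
Your proof is correct, and it organizes the argument along a genuinely different route than the paper. The paper proves the equivalence in two separate directions: when none of (1)--(3) holds, it exhibits an explicit monomial $f$ (of degree $\tfrac{1}{2}\deg h$ or $\deg h - \deg g$) that lowers $\deg h$; then, for each of the three conditions, it runs a case analysis comparing $2\deg f$ with $\deg h$, invoking the factorization $f(g+f)$ only in case (3) and a count of top-degree terms in case (2). You instead reduce minimality (for $h \ne 0$) to the statement that $\deg h$ avoids the set $S$ of degrees attained by $f(f+g)$ --- a set that does not depend on $h$ --- and compute $S$ once by splitting on $\deg f$ versus $\deg g$. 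This treats both directions and all three conditions in a single computation, and it makes the structural reason for the trichotomy transparent: conditions (1)--(3) are exactly the three gaps in $S$. Nothing is lost relative to the paper, since your computation of $S$ produces explicit monomial witnesses, which is what the paper's proof feeds into the reduction algorithm of \Cref{rem:alg}. One point in your sketch does need to be made explicit, namely the case $\deg f = \deg g = d$: asserting that $\deg(f(f+g))$ ``falls below $2d$'' is not by itself enough, because if this case could produce a degree below $d$, the complement of $S$ would not contain all of $\{ n : n < d \}$, and the implication ``condition (3) $\Rightarrow$ minimal'' would be unjustified. The fix is one line: $\deg f = d$ forces $f = g + r$ with $\deg r < d$, so $f(f+g) = (g+r)r$ has degree $d + \deg r \in \{d, \dots, 2d-1\}$ or equals $0$, hence this case indeed contributes nothing new.
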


\begin{proof}
First, we show that every minimal representative must satisfy one of the 3 conditions.
So suppose $(g,h)$ is a minimal representative.
If $2 \deg g < \deg h$ and $\deg h$ is even, then for any $f \in \F_2[x]$ with $\deg f = \frac{1}{2} \deg h$, we have $\deg(fg) < \deg(f^2) = \deg h$ and hence $\deg(h + fg + f^2) < \deg h$, contradiction.
Similarly, if $\deg g \le \deg h < 2 \deg g$ (which implies $\deg g > 0$), then for any $f \in \F_2[x]$ with $\deg f = \deg h - \deg g$, we have $\deg(f^2) < \deg(fg) = \deg h$, and again $\deg(h + fg + f^2) < \deg h$, contradiction.

Next, we show that each of the 3 cases is indeed a minimal representative.
For case (1): let $2 \deg g < \deg h$ with $\deg h$ odd.
We show that $\deg h \le \deg(h + fg + f^2)$ for any $f \in \F_2[x]$.
If $2 \deg f < \deg h$ then $\deg(h + fg + f^2) = \deg h$, while if $2 \deg f > \deg h$ then $\deg(f^2) > \max\{\deg(h), \deg(fg)\}$, so $\deg(h + fg + f^2) > \deg h$.

For case (2): let $2 \deg g = \deg h$.
Then as in case (1), $2 \deg f > \deg h \implies \deg(h + fg + f^2) > \deg h$, while $2 \deg f \le \deg h \implies \deg(h + fg + f^2) = \deg h$ (note that when $2 \deg f = \deg h$, all of $h, fg, f^2$ have the same degree, hence their sum -- which has an odd number of top degree terms -- does as well).

Finally, for case (3): let $\deg g > \deg h$.
We claim that the only possible $f \in \F_2[x]$ with $\deg(h + fg + f^2) < \deg g$ are $f = 0$ or $f = g$.
To see this, note that 
$f(g + f) = fg + f^2 = h + (h + fg + f^2)$ has degree $< \deg g$.
However, assuming any of ($\deg f > \deg g$), ($\deg f = \deg g$ and $f \ne g$), or ($\deg f < \deg g$ and $f \ne 0$) implies that $\deg(f(g + f)) \ge \deg g$, a contradiction.
This shows that for any $f \in \F_2[x]$, either $h + fg + f^2 = h$, or $\deg h < \deg g \le \deg(h + fg + f^2)$.
\end{proof}

We formalize the result of \Cref{thm:minRep} into a definition:

\begin{definition}
We say that a minimal representative $(g,h)$ is
\begin{itemize}
\item \emph{Type 1}: if $2 \deg g < \deg h$ and $\deg h$ is odd
\item \emph{Type 2}: if $2 \deg g = \deg h$
\item \emph{Type 3}: if $\deg g > \deg h$.
\end{itemize}
\end{definition}

Although it is possible for a given orbit to have more than one minimal representative, it follows from the proof of \Cref{thm:minRep} that
this can only happen for Types 1 and 2:

\begin{corollary}
Type 3 minimal representatives are unique in their orbit.
\end{corollary}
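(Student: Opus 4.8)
The plan is to exploit the computation already carried out in case (3) of the proof of \Cref{thm:minRep}. Since the action fixes $g$ and sends $h \mapsto h + gf + f^2$, the orbit of $(g,h)$ is determined by the set of polynomials $\{ h + gf + f^2 \mid f \in \F_2[x] \}$, and a minimal representative is simply a pair $(g, h')$ with $h'$ of least degree in this set. Thus uniqueness of the minimal representative is equivalent to the assertion that there is a unique polynomial of minimal degree in the orbit of $h$.

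First I would record the two trivial solutions: taking $f = 0$ returns $h$, and taking $f = g$ gives $h + g^2 + g^2 = h$ (using characteristic $2$), so both of these yield the representative $h$ itself. Next I would invoke the degree estimate from case (3): for every other $f$ — that is, $f \notin \{0, g\}$ — one has $\deg\bigl(f(g+f)\bigr) \ge \deg g$, and hence $\deg(h + gf + f^2) \ge \deg g$. Here the Type 3 hypothesis $\deg g > \deg h$ is essential: it forces $\deg(h + gf + f^2) \ge \deg g > \deg h$, so every orbit element other than $h$ has strictly larger degree than $h$.

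Combining these observations, $h$ is the unique polynomial of minimal degree in its orbit, and therefore $(g,h)$ is the unique minimal representative, as claimed. The only point requiring care is that distinct values of $f$ may collapse to the same representative — indeed $f = 0$ and $f = g$ both produce $h$ — but this does not affect the conclusion, since uniqueness is asserted at the level of the resulting polynomial $h'$, not of the transformation $\phi_f$ producing it. I do not anticipate any serious obstacle, as the essential inequality has already been established within the proof of \Cref{thm:minRep}; the entire content of the corollary is the observation that in Type 3 the strict gap $\deg g > \deg h$ upgrades ``minimal'' to ``uniquely minimal,'' whereas in Types 1 and 2 the competing terms can share the top degree and so permit multiple minimizers.
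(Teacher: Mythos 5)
Your proof is correct and is essentially the argument the paper intends: the corollary follows from the last sentence of case (3) in the proof of \Cref{thm:minRep}, namely that for any $f$ either $h + gf + f^2 = h$ (which happens exactly for $f \in \{0, g\}$) or $\deg(h + gf + f^2) \ge \deg g > \deg h$, so $h$ is the unique minimal-degree element of its orbit. Your additional remarks (that $g$ is constant along the orbit, and that the collapse $f = 0, g \mapsto h$ is harmless) are exactly the right points of care, and nothing is missing.
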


\begin{remark} \label{rem:alg}
From the proof of \Cref{thm:minRep}, we also obtain an algorithm to compute a minimal representative for any orbit: given $(g, h) \in \F_2[x] \times \F_2[x]$ which is not a minimal representative, set $f := x^d$, where 
\[d = \begin{cases}
\frac{\deg h}{2}, &\quad 2 \deg g < \deg h \\
\deg h - \deg g, &\quad \text{else}
\end{cases}\]
(note that in the first case, $\deg h$ is even, and in the second case, $\deg h \ge \deg g$).
Then $\deg(h + fg + f^2) < \deg h$, and iterating this process leads to a minimal representative.
In fact, each iteration removes the top degree monomial of $h$, thus arrives at the minimal representative after at most $1 + \deg h$ steps.
By recording the degree $d$ used at each step, this also constructs term-by-term the polynomial $f$ such that $\phi_f \cdot (g, h)$ is a minimal representative.
\end{remark}

Since the $\phi_f$ are ring automorphisms, there is an isomorphism of unit groups 
\[
(\F_2[x,y]/(y^2 + gy + h))^\times \cong (\F_2[x,y]/(y^2 + gy + h + fg + f^2))^\times
\]
for any $f \in \F_2[x]$.
Thus for our purposes, we may restrict our attention to minimal representatives $(g, h)$ as necessary.

\section{Units of \texorpdfstring{$\F_2[x,y]/(y^2 + gy + h)$}{F2[x,y]/(y2+gy+h)}} \label{sec:eqns}

\subsection{Units to quadratic forms} \label{ssec:quad}
Given the ring $R = \F_2[x,y]/(y^2 + gy + h)$, we now show that the problem of finding units is equivalent to finding solutions to an associated quadratic equation over $\F_2[x]$.
Nearly all subsequent results in this paper will build on this foundational link.
Recall from \Cref{eq:normalForm} that elements of $R$ have a normal form $\{ a + by \mid a, b \in \F_2[x] \}$, so we may identify the element $a + by \in R$ with the pair $(a, b) \in \F_2[x] \times \F_2[x]$ (not to be confused with the ideal in $\F_2[x]$ generated by $a, b$, which we will always denote by $\gcd(a, b)$).

\begin{proposition} \label{prop:fundEq}
Let $g, h \in \F_2[x]$.
An element $a + by \in \F_2[x,y]/(y^2 + gy + h)$ is a unit if and only if $(a,b) \in \F_2[x] \times \F_2[x]$ satisfies the quadratic equation
\begin{equation} \label{eq:fundEq}
a^2 + abg + b^2h = 1
\end{equation}
In this case, the inverse unit is given by $(a + by)^{-1} = (a + bg) + by$.
\end{proposition}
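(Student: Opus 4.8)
The plan is to use the free $\F_2[x]$-module structure of $R$ with basis $\{1,y\}$ and detect units through the associated norm map. Writing $m_\alpha$ for the $\F_2[x]$-linear endomorphism ``multiplication by $\alpha$'' on $R$, the first step is to record its matrix in the basis $\{1,y\}$. Using $y^2 = gy + h$, one computes $(a+by)\cdot 1 = a+by$ and $(a+by)\cdot y = bh + (a+bg)y$, so
\[
m_{a+by} = \begin{pmatrix} a & bh \\ b & a+bg \end{pmatrix},
\]
whose determinant is $N(a+by) := a^2 + abg + b^2 h$ (recall $-1 = 1$) and whose trace is $bg$. I set $N(\alpha) := \det m_\alpha$ for the rest of the argument.

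The backward implication and the inverse formula fall out of a single direct computation. Expanding $(a+by)\bigl((a+bg)+by\bigr)$ and reducing via $y^2 = gy + h$, every $y$-term cancels in characteristic $2$, leaving exactly $a^2 + abg + b^2 h = N(a+by) \in \F_2[x]$. Hence if $N(a+by) = 1$, then $(a+bg)+by$ is a genuine inverse of $a+by$, which simultaneously shows $a+by$ is a unit and confirms the stated inverse. (This is Cayley--Hamilton in disguise: in characteristic $2$ the element $\alpha = a+by$ satisfies $\alpha^2 + bg\,\alpha + N(\alpha) = 0$, so $N(\alpha) = 1$ forces $\alpha(\alpha + bg) = 1$, and $\alpha + bg = (a+bg)+by$.)

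For the forward implication I would invoke multiplicativity of the determinant: since $m_{\alpha\beta} = m_\alpha m_\beta$, the norm satisfies $N(\alpha\beta) = N(\alpha)N(\beta)$, with $N(1) = 1$. If $a+by$ is a unit with inverse $\beta$, then $N(a+by)\,N(\beta) = 1$ in $\F_2[x]$, so $N(a+by)$ is a unit of $\F_2[x]$. As $\F_2[x]^\times = \{1\}$, this gives $a^2 + abg + b^2 h = 1$, completing the equivalence.

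I expect the only delicate point to be this transfer of the unit condition from $R$ back to $\F_2[x]$ in the forward direction; everything else is routine bookkeeping in the expansion of step two. An alternative, fully concrete way to handle the transfer is to observe that $\sigma : R \to R$, $y \mapsto y+g$, is a well-defined $\F_2[x]$-algebra involution, since $(y+g)^2 + g(y+g) + h = y^2 + gy + h$ in characteristic $2$; it sends $a+by \mapsto (a+bg)+by$, and $N(a+by) = (a+by)\,\sigma(a+by)$. Because $\sigma$ carries units to units, $a+by$ being a unit forces $N(a+by)$ to be a unit of $R$ lying in $\F_2[x]$; comparing coefficients of $1$ and $y$ in the normal form of its inverse then shows any such element must equal $1$. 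Either route closes the argument.
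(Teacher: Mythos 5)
Your proof is correct, and it takes a genuinely different route from the paper. The paper works directly with an unknown inverse $c+dy$: expanding $(a+by)(c+dy)=1$ gives the system $ac+bdh=1$, $ad+bc+bdg=0$, and then a gcd/divisibility argument ($\gcd(a,b)=\gcd(c,d)=1$, $b\mid d$ and $d\mid b$) pins down $d=b$ and $c=a+bg$, after which substitution yields the equation $a^2+abg+b^2h=1$. You instead package everything into the norm $N(\alpha)=\det m_\alpha$ of the free rank-$2$ extension $\F_2[x]\subseteq R$: multiplicativity of the determinant handles the forward direction in one line (a unit has norm a unit of $\F_2[x]$, hence $1$), and Cayley--Hamilton (equivalently, the involution $y\mapsto y+g$) produces the inverse formula $\alpha^{-1}=\alpha+bg=(a+bg)+by$ rather than deriving it by elimination. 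Your argument is more structural and generalizes immediately to any ring monic and finite free over $\F_2[x]$ (unit $\iff$ norm is a unit), which could be relevant to the paper's closing question about higher-degree curves; the paper's argument is more elementary and self-contained, discovering the shape of the inverse from scratch instead of verifying a candidate. All the computations you rely on check out: the matrix of multiplication, the cancellation of the $y$-terms in $(a+by)\bigl((a+bg)+by\bigr)$ in characteristic $2$, and the final reduction that a polynomial $p\in\F_2[x]$ which is a unit in $R$ must satisfy $pc=1$, $pd=0$ by uniqueness of the normal form, hence $p=1$.
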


\begin{proof}
Recall that $R := \F_2[x,y]/(y^2 + gy + h)$ is a free $\F_2[x]$-module with basis $\{ 1, y \}$.
An element $a + by \in R$ is a unit if and only if there exists $c + dy \in R$ with
\[
(a + by)(c + dy) = 1.
\]
Expanding and equating coefficients gives the system of equations (over $\F_2[x]$)
\begin{align*}
ac + bdh &= 1 \\
ad + bc + bdg &= 0.
\end{align*}
Now the first equation implies $\gcd(a, b) = \gcd(c, d) = 1$ in $\F_2[x]$, so writing the second equation as
\[
ad = b(c + dg)
\]
we see that $b \mid ad$, $\gcd(a, b) = 1 \implies b \mid d$.
Furthermore $\gcd(d, c + dg) = \gcd(d, c) = 1$, hence $d \mid b$.
Thus $d$ and $b$ are the same up to units of $\F_2[x]$, so in fact $d = b$ (as $\F_2[x]^\times = \{ 1 \}$), and similarly $a = c + dg = c + bg$.
This gives the desired form of the inverse unit, and substituting $c = a + bg$, $d = b$ into $ac + bdh = 1$ gives \Cref{eq:fundEq}.
Conversely, any pair $(a, b)$ satisfying $a^2 + abg + b^2h = 1$ gives a unit $a + by$ with inverse $(a + bg) + by$, by the same reasoning.
\end{proof}

In other words: given $g, h \in \F_2[x]$, consider the binary quadratic form over $\F_2[x]$
\begin{equation} \label{eq:quadratic}
Q(X, Y) := X^2 + gXY + hY^2.
\end{equation}
Then \Cref{prop:fundEq} states that units of $\F_2[x,y]/(y^2 + gy + h)$ are in bijection with \emph{representations of} $1$ \emph{by} $Q$.

\begin{remark}
The trivial unit $1$ corresponds to $(a,b) = (1, 0)$, which we call the \emph{trivial solution} to \Cref{eq:fundEq}.
Note that setting $b = 0$ in \Cref{eq:fundEq} forces $a = 1$, so that a solution $(a,b)$ is nontrivial if and only if $b \ne 0$.
\end{remark}

One may ask whether representations of other polynomials by $Q$ gives information about representations of $1$ by $Q$, and this is indeed the case (which, as we will see, relies on the fact that $1$ is a square).
First, we introduce some notation:

\begin{definition} \label{def:sqpsqf}
For $0 \ne f \in \F_2[x]$, define $f_r$, $f_s \in \F_2[x]$ to be the unique polynomials such that $f = f_r f_s^2$ and $f_r$ is squarefree.
Explicitly, if $f = \prod_i p_i^{e_i}$ is the unique prime factorization of $f$, where $p_i \in \F_2[x]$ are irreducible, then $f_r = \prod_i p_i^{e_i \!\pmod 2}$, $f_s = \prod_i p_i^{\lfloor \frac{e_i}{2} \rfloor}$.
\end{definition}

\begin{lemma} \label{lem:div}
Let $f, g \in \F_2[x]$.
Then $f \mid g^2 \iff f_r f_s \mid g$.
\end{lemma}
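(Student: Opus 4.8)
The plan is to prove the equivalence by reducing it to a statement about exponents of irreducible factors, exploiting that $\F_2[x]$ is a UFD (indeed a PID). For an irreducible $p \in \F_2[x]$ and nonzero $q \in \F_2[x]$, write $v_p(q)$ for the exponent to which $p$ divides $q$; then for any nonzero $a, b$ one has $a \mid b$ if and only if $v_p(a) \le v_p(b)$ for every irreducible $p$. The degenerate cases are handled separately and trivially: $f_r f_s$ is only defined for $f \ne 0$, and if $g = 0$ both sides assert $f \mid 0$, which always holds, so the content lies entirely in the nonzero case.

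First I would compute $v_p(f_r f_s)$ directly from \Cref{def:sqpsqf}. Writing $f = \prod_i p_i^{e_i}$, the definition gives $v_{p_i}(f_r) = e_i \bmod 2$ and $v_{p_i}(f_s) = \lfloor e_i/2 \rfloor$, so that
\[
v_{p_i}(f_r f_s) = (e_i \bmod 2) + \lfloor e_i/2 \rfloor = \lceil e_i/2 \rceil.
\]
Thus $f_r f_s$ is exactly the product $\prod_i p_i^{\lceil e_i/2 \rceil}$, i.e.\ the ``rounded-up square root'' of $f$. With this in hand, translating both sides of the claimed equivalence into valuation inequalities reduces the lemma to an elementary fact about integers.

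Concretely, $f \mid g^2$ holds if and only if $e_i \le v_{p_i}(g^2) = 2\,v_{p_i}(g)$ for all $i$, while $f_r f_s \mid g$ holds if and only if $\lceil e_i/2 \rceil \le v_{p_i}(g)$ for all $i$. Since each $v_{p_i}(g)$ is a nonnegative integer, the two conditions agree factor-by-factor via the identity $e \le 2n \iff \lceil e/2 \rceil \le n$ (valid for integers $e, n \ge 0$), which finishes the proof. I do not expect any genuine obstacle; the only point requiring care is the forward implication $f \mid g^2 \Rightarrow f_r f_s \mid g$, where the squarefreeness of $f_r$ is implicitly used to pin down $v_{p_i}(f_s) = \lfloor e_i/2 \rfloor$. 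For the reverse implication one can alternatively avoid valuations entirely: if $f_r f_s \mid g$ then $f = f_r f_s^2 \mid f_r^2 f_s^2 = (f_r f_s)^2 \mid g^2$, using only $f_r \mid f_r^2$.
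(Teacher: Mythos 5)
Your proof is correct, including the careful handling of the degenerate cases and the key identity $\nu_{p_i}(f_rf_s)=(e_i \bmod 2)+\lfloor e_i/2\rfloor=\lceil e_i/2\rceil$, but it takes a different route from the paper. You work uniformly at the level of exponents: identifying $f_rf_s$ as the ``rounded-up square root'' $\prod_i p_i^{\lceil e_i/2\rceil}$ of $f$, you translate both sides of the equivalence into valuation inequalities and reduce the whole lemma to the integer fact $e\le 2n \iff \lceil e/2\rceil \le n$, which proves both directions at once. The paper instead argues by a short chain of divisibility statements, treating the two implications asymmetrically: the backward direction is the one-line observation $f \mid (f_rf_s)^2$ (which you also note as an alternative), and the forward direction proceeds via $f_s^2 \mid g^2 \Rightarrow f_s \mid g$, then uses squarefreeness of $f_r$ to conclude $f_r \mid \frac{g}{f_s}$. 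Your approach makes the structural meaning of $f_rf_s$ explicit and is more symmetric, at the cost of introducing valuation notation (which, incidentally, the paper only sets up later, as $\nu_p$, before \Cref{prop:fundUnitConstraints}); the paper's argument is terser and stays entirely within divisibility language, but hides the two UFD facts it invokes (square-root extraction and the squarefree-divides-square step) as unproved observations --- facts which your computation verifies transparently.
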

\begin{proof}
For $\Leftarrow$: observe that $f \mid (f_r f_s)^2$.
For $\Rightarrow$: observe that $f_s^2 \mid g^2 \implies f_s \mid g$, hence $f_r = \frac{f}{f_s^2} \mid (\frac{g}{f_s})^2$.
Since $f_r$ is squarefree, this implies $f_r \mid \frac{g}{f_s}$, i.e. $f_r f_s \mid g$.
\end{proof}

\begin{proposition} \label{prop:fundEq2}
Let $Q$ be as in \Cref{eq:quadratic}.
\begin{enumerate}
\item If $a, b \in \F_2[x]$ with $Q(a,b) = 1$, then $b \mid (1 + a)^2$, $b_r \mid g$, and 
\[
Q \left( \frac{1 + a}{b_rb_s}, b_s \right) = \frac{g}{b_r}.
\]
\item Conversely, if $c, d, e \in \F_2[x]$ with $e$ squarefree, $e \mid g$ and $Q(c,d) = \frac{g}{e}$, then $Q(1 + cde, ed^2) = 1$.
\end{enumerate}
\end{proposition}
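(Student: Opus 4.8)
The plan is to prove both parts by direct computation in $\F_2[x]$, the essential leverage being the characteristic-$2$ identity $(1+a)^2 = 1 + a^2$ together with \Cref{lem:div}.

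For part (1), I would start from the defining relation $a^2 + abg + b^2 h = 1$ and rewrite it as
\[
(1+a)^2 = 1 + a^2 = abg + b^2 h = b(ag + bh),
\]
which immediately gives $b \mid (1+a)^2$. Applying \Cref{lem:div} to this divisibility yields $b_r b_s \mid (1 + a)$, so I may write $1 + a = b_r b_s k$ for some $k \in \F_2[x]$; note that $k$ is precisely the first argument $\frac{1+a}{b_r b_s}$ appearing in the claim. Substituting $(1+a)^2 = (b_r b_s k)^2$ and $b = b_r b_s^2$ into the displayed equation and cancelling the common factor $b_r b_s^2$ gives $ag + bh = b_r k^2$. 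Finally, expanding $a = 1 + b_r b_s k$ inside $ag + bh$ and collecting terms factors the left-hand side as $b_r(k^2 + k b_s g + b_s^2 h) = g$, from which both conclusions follow at once: $b_r \mid g$, and $\frac{g}{b_r} = k^2 + k b_s g + b_s^2 h = Q(k, b_s)$.

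For part (2), I would simply expand $Q(1 + cde, ed^2)$ and simplify. Writing $g' := \frac{g}{e}$ (so that $g = e g'$ since $e \mid g$), the hypothesis $Q(c,d) = g'$ becomes $c^2 + d^2 h = g'(1 + cde)$ after substituting $g = eg'$ and rearranging in characteristic $2$. Expanding
\[
Q(1 + cde, ed^2) = (1+cde)^2 + (1+cde)(ed^2) g + (ed^2)^2 h
\]
in characteristic $2$ and again substituting $g = eg'$, every term beyond the constant $1$ collects into $e^2 d^2$ times $(c^2 + d^2 h) + g'(1 + cde)$, which vanishes by the rewritten hypothesis. Hence $Q(1 + cde, ed^2) = 1$.

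The main obstacle is the factorization step in part (1): seeing that after the cancellation $ag + bh = b_r k^2$, re-expanding $a = 1 + b_r b_s k$ pulls out a single clean factor of $b_r$, simultaneously yielding the divisibility $b_r \mid g$ and the exact value $\frac{g}{b_r} = Q(k, b_s)$. Everything else is routine characteristic-$2$ bookkeeping, provided one consistently uses $(1+a)^2 = 1 + a^2$ and tracks the squarefree and square parts $b_r, b_s$ via \Cref{lem:div}.
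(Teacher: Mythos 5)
Your proof is correct and takes essentially the same route as the paper's: both exhibit $b \mid (1+a)^2$ from the unit equation, invoke \Cref{lem:div} to write $1 + a = b_r b_s k$, then substitute and cancel $b_r b_s^2$ so that $b_r$ factors out, yielding $b_r \mid g$ and $Q(k, b_s) = g/b_r$ simultaneously. Your explicit expansion in part (2) is simply the paper's ``tracing these steps backwards'' carried out in full, which is a welcome bit of added detail but not a different argument.
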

\begin{proof}
Writing $Q(a, b) = 1$ in the form
\[
(a + 1)^2 + (a + 1)bg + b^2h = bg
\]
shows that $b \mid (1 + a)^2$, so it follows from \Cref{lem:div} that $b_r b_s \mid (1 + a)$, i.e. we may write $a + 1 = c b_r b_s$ for some $c \in \F_2[x]$.
Substituting back yields
\[
(c b_r b_s)^2 + (c b_r b_s)(b_r b_s^2)g + (b_r b_s^2)^2 h = (b_r b_s^2)g
\]
and upon cancelling $b_r b_s^2$,
\[
b_r (c^2 + c b_s g + b_s^2 h) = g
\]
so $b_r \mid g$, and dividing through by $b_r$ gives $Q(c, b_s) = \frac{g}{b_r}$.
This proves (1), and tracing these steps backwards gives (2).
\end{proof}

\subsection{Degree relations} \label{ssec:deg}
In this subsection, we consider the constraints that \Cref{eq:fundEq} imposes on the degrees of $a, b, g, h \in \F_2[x]$.
Together with the results of \Cref{sec:orbits}, this leads to a resolution of Question \ref{eq:mainQ} for minimal representatives of Types 1 and 2.


\begin{theorem} \label{thm:type12}
Let $(g,h)$ be a minimal representative.
If $(g,h)$ is Type 1, or $(g,h)$ is Type 2 and $\deg g > 0$, then $(\F_2[x,y]/(y^2 + gy + h))^\times = \{ 1 \}$.
\end{theorem}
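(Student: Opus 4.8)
The plan is to translate the statement about units into the quadratic equation furnished by \Cref{prop:fundEq}: the unit group of $R = \F_2[x,y]/(y^2+gy+h)$ is trivial precisely when the only solution of $a^2 + abg + b^2 h = 1$ over $\F_2[x]$ is the trivial one $(a,b) = (1,0)$, equivalently when every solution has $b = 0$. So I would assume toward a contradiction that there is a solution $(a,b)$ with $b \neq 0$ and derive a contradiction purely from degree considerations. Writing $\alpha = \deg a$, $\beta = \deg b$, $\gamma = \deg g$, $\delta = \deg h$, the three summands $a^2$, $abg$, $b^2 h$ have degrees $2\alpha$, $\alpha+\beta+\gamma$, and $2\beta+\delta$ respectively. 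First I would dispose of the degenerate case $a = 0$ directly: then the equation reads $b^2 h = 1$, forcing $\deg h = 0$, which is impossible in either type (Type 1 has $\deg h$ odd, and Type 2 with $\deg g > 0$ has $\deg h = 2\deg g \geq 2$). Hence I may assume $a, b \neq 0$, so all four degrees are finite and nonnegative.

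The engine of the argument is a single observation peculiar to characteristic $2$: every nonzero polynomial in $\F_2[x]$ has leading coefficient $1$. Thus, letting $M$ be the maximum of the three summand degrees, the coefficient of $x^M$ in $a^2 + abg + b^2 h$ equals the number of summands attaining degree $M$, reduced modulo $2$. Since the right-hand side is the constant $1$ while $M \geq 2\beta + \delta \geq \delta > 0$ (using $b \neq 0$ and $\deg h > 0$), this coefficient must vanish, so an \emph{even} number of the three summands attain degree $M$. As there are only three summands and at least one attains $M$, exactly two must share the top degree while the third is strictly smaller. The whole proof then reduces to showing that this balanced configuration is incompatible with the degree constraints defining each type.

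For Type 1, I would exploit parity. Because $\deg h$ is odd, $2\beta + \delta$ is odd while $2\alpha$ is even, so the pair attaining $M$ cannot be $\{a^2, b^2 h\}$. The two remaining possibilities are $2\alpha = \alpha+\beta+\gamma > 2\beta+\delta$ and $\alpha+\beta+\gamma = 2\beta+\delta > 2\alpha$; in each I would solve the equality for $\alpha$ and substitute into the strict inequality, and in both cases this yields $\delta < 2\gamma$, contradicting the Type 1 hypothesis $2\deg g < \deg h$. For Type 2 with $\deg g > 0$ the mechanism is cleaner still: here $\delta = 2\gamma$, and one checks that each of the pairwise equalities among $2\alpha$, $\alpha+\beta+\gamma$, and $2\beta+2\gamma$ is equivalent to the single relation $\alpha = \beta + \gamma$, which forces all three to coincide. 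Hence the number of top-degree summands is always odd --- one when $\alpha \neq \beta+\gamma$ and three when $\alpha = \beta + \gamma$ --- contradicting the even count established above. This is also where $\deg g > 0$ is essential: it guarantees $M \geq \delta = 2\gamma > 0$, whereas when $\deg g = 0$ the only Type 2 representative is $(g,h) = (1,1)$, whose ring is $\F_4[x]$ with nontrivial units.

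The step I expect to require the most care is the bookkeeping in the two degenerate situations --- $a = 0$ and, in Type 1, $g = 0$ (which is permitted there, making the middle summand vanish and reducing everything to the parity clash between $a^2$ and $b^2 h$) --- together with the precise justification that ``the top degree survives unless an even number of summands tie.'' Once the leading-coefficient count is stated correctly, the type-by-type case analysis is a short and mechanical manipulation of the degree (in)equalities, so I do not anticipate genuine difficulty there; the real content of the theorem is the interaction between the characteristic-$2$ cancellation count and the defining inequalities of Types 1 and 2.
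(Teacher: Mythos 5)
Your proposal is correct and takes essentially the same approach as the paper: both reduce to the equation $a^2 + abg + b^2h = 1$ via \Cref{prop:fundEq} and then run the same three-way degree comparison among $a^2$, $abg$, $b^2h$, exploiting characteristic-$2$ cancellation of leading terms to rule out each configuration against the Type 1/Type 2 inequalities. Your ``even number of summands attain the top degree'' formulation is the same observation as the paper's ``$\deg(abg) \le \max\{2\deg a,\, 2\deg b + \deg h\}$, with equality unless $2\deg a = 2\deg b + \deg h$,'' just packaged as a leading-coefficient count, and your case analysis (including the degenerate cases $a=0$ and $g=0$) matches the paper's trichotomy case for case.
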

\begin{proof}
Suppose that $(a, b)$ is a nontrivial unit, i.e. $b \ne 0$.
Note that $\deg h > 0$ under either hypothesis, hence $\deg(b^2h) = \deg(b^2h + 1) > 0$, and furthermore $a \ne 0$.
Then
\begin{align} \label{eq:degIneq}
\deg a + \deg b + \deg g &= \deg(abg) \nonumber \\
&= \deg(a^2 + b^2h + 1) \nonumber \\
&\le \max \{ 2 \deg a, 2 \deg b + \deg h \}
\end{align}
and the inequality \ref{eq:degIneq} is an equality if and only if 
\begin{equation} \label{eq:ineq}
2 \deg a \ne 2 \deg b + \deg h.
\end{equation}
We now analyze the possiblities for \ref{eq:degIneq}: first,
\begin{align*}
2 \deg a > 2 \deg b + \deg h &\implies \deg a = \deg b + \deg g \\
&\implies 2(\deg b + \deg g) > 2 \deg b + \deg h \\
&\implies 2 \deg g > \deg h
\end{align*}
(note that since $b \ne 0$, subtracting $\deg b < \infty$ is a valid operation). Next,
\begin{align*}
2 \deg a < 2 \deg b + \deg h &\implies \deg a = \deg b + \deg h - \deg g \\
&\implies 2(\deg b + \deg h - \deg g) < 2 \deg b + \deg h \\
&\implies 2 \deg g > \deg h
\end{align*}
(in this case, $a^2 + b^2h + 1 \ne 0 \implies g \ne 0$, so we may add/subtract $\deg g$).
Finally, if $2 \deg a = 2 \deg b + \deg h$, then $\deg h$ is even, and \ref{eq:ineq} does \emph{not} hold, i.e. \ref{eq:degIneq} is a strict inequality, so
\begin{align*}
2 \deg a = 2 \deg b + \deg h &\implies \deg a + \deg b + \deg g < 2 \deg a \\
&\implies \deg b + \deg g < \deg b + \frac{1}{2} \deg h \\
&\implies 2 \deg g < \deg h.
\end{align*}
Since the hypotheses (namely $(g, h)$ Type 1 or 2) are incompatible with all 3 cases, we conclude that there cannot exist a nontrivial unit.
\end{proof}

We remark that in \Cref{thm:type12}, the condition $\deg g > 0$ in the Type 2 case is necessary; the cases with $\deg g \le 0$ are treated in \Cref{prop:gZero,prop:gOne}.



\begin{proposition} \label{prop:type3degs}
Let $(g, h)$ be a Type 3 minimal representative with $\deg h > 0$.
Then for any nontrivial unit $a + by \in (\F_2[x,y]/(y^2 + gy + h))^\times$, exactly one of the following holds:
\begin{enumerate}[\normalfont(I)]
\item $\deg a = \deg b + \deg h - \deg g$
\item $\deg a = \deg b + \deg g$.
\end{enumerate}
Moreover, a unit satisfies {\normalfont(I)} if and only if its inverse satisfies \normalfont(II).
\end{proposition}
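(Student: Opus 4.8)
The plan is to rerun the degree bookkeeping of \Cref{thm:type12}, but now to extract positive information rather than a contradiction. I would start from \Cref{eq:fundEq} rewritten (in characteristic $2$) as $abg = a^2 + b^2h + 1$, first recording that a nontrivial unit has $b \ne 0$ and that $\deg h > 0$ forces $a \ne 0$ (otherwise $b^2h = 1$ is impossible, since $\deg(b^2h) \ge \deg h > 0$). Then the inequality \Cref{eq:degIneq}, namely $\deg a + \deg b + \deg g = \deg(a^2 + b^2h + 1) \le \max\{2\deg a,\, 2\deg b + \deg h\}$, becomes the workhorse, and everything turns on the trichotomy comparing $2\deg a$ with $2\deg b + \deg h$.

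In the two unequal cases the maximum is attained cleanly. If $2\deg a > 2\deg b + \deg h$, then $a^2$ is the dominant term, so $\deg a + \deg b + \deg g = 2\deg a$, which rearranges to (II); symmetrically $2\deg a < 2\deg b + \deg h$ makes $b^2h$ dominant and yields $\deg a + \deg b + \deg g = 2\deg b + \deg h$, i.e.\ (I). I would then check that each of these is consistent with Type 3 (both reduce to $2\deg g > \deg h$, which holds since $\deg g > \deg h > 0$), and that (I) and (II) cannot hold simultaneously, as together they force $\deg h = 2\deg g$, impossible when $\deg g > \deg h$. This gives the ``exactly one'' dichotomy, provided the middle case is excluded.

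Excluding the equal case $2\deg a = 2\deg b + \deg h$ is the crux, and is precisely where characteristic $2$ enters. Here $a^2$ and $b^2h$ have the same degree, and since the leading coefficient of any nonzero polynomial over $\F_2$ equals $1$, their leading terms both equal $x^{2\deg a}$ and cancel. Hence $\deg(a^2 + b^2h + 1) < 2\deg a$, the inequality \Cref{eq:degIneq} is strict, and rearranging gives $2\deg g < \deg h$, contradicting $\deg g > \deg h$. I expect this cancellation to be the main obstacle: it is the one step with no analogue in odd characteristic and the one most easily overlooked. Some care is needed to confirm $\deg a > 0$ (so the trailing $+1$ cannot rescue the degree), which follows because $\deg a = 0$ would force $\deg b = 0$ and $\deg h = 0$.

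Finally, for the inverse statement I would use that the inverse $(a + bg,\, b)$ again satisfies \Cref{eq:fundEq}, as recorded in \Cref{prop:fundEq}, so the dichotomy applies to it as well; it then suffices to show that a unit and its inverse never share a type. If the unit has type (I), then $\deg a = \deg b + \deg h - \deg g < \deg b + \deg g = \deg(bg)$ because $\deg g > \deg h$, so no cancellation occurs and $\deg(a + bg) = \deg b + \deg g$, which is type (II) for the inverse. If instead the unit has type (II), then $\deg a = \deg(bg)$ with $a$ and $bg$ both monic of degree $\deg b + \deg g$, so $a + bg$ has strictly smaller degree; this rules out type (II) for the inverse, which must therefore have type (I) by the dichotomy. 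Since inversion is an involution, these two implications together yield the asserted biconditional.
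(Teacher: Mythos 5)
Your proposal is correct and follows essentially the same route as the paper: the paper's proof simply invokes the degree trichotomy from the proof of \Cref{thm:type12} (noting the equal-degree case forces $2\deg g < \deg h$, impossible for Type 3, and that (I) and (II) are disjoint for Type 3), and derives the inverse statement from the formula $(a+by)^{-1} = (a+bg)+by$ of \Cref{prop:fundEq}. You have merely written out in full the details the paper leaves implicit, including the leading-term cancellation over $\F_2$ and the case analysis for the inverse, all of which check out.
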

\begin{proof}
Under the assumption $\deg h > 0$, the same reasoning as in the proof of \Cref{thm:type12} shows that any nontrivial unit belongs to case (I) or (II) (and the cases are disjoint since $(g, h)$ is Type 3).
The second statement concerning inverses then follows from \Cref{prop:fundEq}.
\end{proof}

\begin{corollary} \label{cor:degLowerBound}
Let $(g, h)$ be a Type 3 minimal representative with $\deg h > 0$.
If $a + by$ is a nontrivial unit in $(\F_2[x,y]/(y^2 + gy + h))^\times$ belonging to case {\normalfont(I)} in \Cref{prop:type3degs} and $\deg a > 0$, then $\deg b \ge 2(\deg g - \deg h)$.
\end{corollary}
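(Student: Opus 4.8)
The plan is to rewrite the fundamental equation \Cref{eq:fundEq} so that the square $1 = 1^2$ is exposed, and then extract the bound from a single degree comparison. Since we work in characteristic $2$, the relation $a^2 + abg + b^2h = 1$ can be rearranged as
\[
b(ag + bh) = 1 + a^2 = (1 + a)^2.
\]
This factored form is the crux of the argument: the right-hand side is a perfect square, so its degree is even and equal to $2\deg(1+a)$, and this is exactly where the factor of $2$ in the desired bound originates.

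Next I would pin down the degrees. Because $a + by$ is a nontrivial unit with $\deg a > 0$, adding the constant $1$ does not affect the leading term, so $\deg(1 + a) = \deg a$ and hence $\deg\bigl((1+a)^2\bigr) = 2\deg a$. Since the unit lies in case (I) of \Cref{prop:type3degs}, we have $\deg a = \deg b + \deg h - \deg g$. The displayed identity is an exact equality of polynomials, with $b \ne 0$ and $(1+a)^2 \ne 0$, so comparing degrees gives $\deg b + \deg(ag + bh) = 2\deg a$, whence
\[
\deg(ag + bh) = 2\deg a - \deg b = \deg b + 2\deg h - 2\deg g.
\]

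The final step is to note that $ag + bh$ cannot vanish: if it did, then $(1+a)^2 = 0$ would force $a = 1$, contradicting $\deg a > 0$. Hence $ag + bh$ is a nonzero polynomial, so $\deg(ag + bh) \ge 0$, and the previous display immediately yields $\deg b + 2\deg h - 2\deg g \ge 0$, i.e. $\deg b \ge 2(\deg g - \deg h)$, as claimed. I do not anticipate a serious obstacle; the only points that require care are the routine verification that $\deg(1 + a) = \deg a$ under the hypothesis $\deg a > 0$, and the nonvanishing of $ag + bh$. The essential idea -- and the reason the bound carries a factor of $2$ rather than merely $\deg b \ge \deg g - \deg h$ -- is the appearance of the perfect square $(1+a)^2$, reflecting the earlier observation that these results hinge on $1$ being a square.
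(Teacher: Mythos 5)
Your proof is correct and is essentially the paper's argument: the identity $b(ag+bh)=(1+a)^2$ is exactly the divisibility $b \mid (1+a)^2$ that the paper cites from \Cref{prop:fundEq2}(1), and your observation that the nonzero cofactor $ag+bh$ has degree $\ge 0$ is equivalent to the paper's inequality $\deg b \le 2\deg a$, after which both arguments substitute the case (I) relation $\deg a = \deg b + \deg h - \deg g$. The only difference is that you re-derive the divisibility inline rather than citing the earlier proposition.
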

\begin{proof}
This follows from the inequality $\deg b \le 2 \deg a$, which in turn follows from $b \mid (1 + a)^2$ (see \Cref{prop:fundEq2}(1)) and $1 + a \ne 0$.
\end{proof}

\subsection{Fundamental Units}
Thus far, the results in \Cref{ssec:quad,ssec:deg} hold for any (nontrivial) unit $a + by \in (\F_2[x,y]/(y^2 + gy + h))^\times$.
However, there are distinguished nontrivial units, which we now examine more closely:

\begin{definition} \label{def:fundUnit}
Let $g, h \in \F_2[x]$, and let $a + by \in (\F_2[x,y]/(y^2 + gy + h))^\times$ be a nontrivial unit.
We say that $a + by$, or equivalently the pair $(a, b) \in \F_2[x] \times \F_2[x]$, is a \emph{fundamental unit} if $\deg a + \deg b$ is minimal over all nontrivial units.
\end{definition}

In particular, when $(\F_2[x,y]/(y^2 + gy + h))^\times = \{ 1 \}$, no fundamental unit exists.
In this way, nontriviality of the unit group is, by definition, equivalent to existence of a fundamental unit.

\begin{remark} \label{rem:uniquenessFundUnit}
Uniqueness of fundamental units is less trivial, though still true.
One way to see this is as follows: after passing to the algebraic closure $\overline{\F_2}$, a classic result of Rosenlicht \cite{Ros} states that for a finitely generated $k$-domain $R$ with $k = \overline{k}$, the group $R^\times/k^\times$ is finitely generated free abelian, of rank strictly less than the number of divisorial components of the boundary of the projective closure.

In this case, if $C \subseteq \mathbb{A}^2_{\overline{\F_2}}$ is the curve defined by $y^2 + gy + h = 0$ (with $(g, h)$ a Type 3 minimal representative), then the projective closure $\overline{C} \subseteq \mathbb{P}^2_{\overline{\F_2}}$ is defined by $y^2z^{d-2} + \tilde{g}y + \tilde{h}z^{d - \deg h} = 0$, where $z$ is a new variable, $\tilde{g}, \tilde{h}$ are the homogenizations of $g, h$ with respect to $z$, and $d := 1 + \deg g$.
Then the boundary $\partial C := \overline{C} \setminus C$ is defined by the vanishing of $z$ on $\overline{C}$, and consists of at most 2 points (namely $(1:0:0)$ and $(0:1:0)$).
Thus the unit group is free abelian of rank $< 2$, hence is either trivial or isomorphic to $\Z$.
See \cite[Corollary 3.2]{CVZ} for more details.

In particular, we may talk about \emph{the} fundamental unit (if it exists), which is a generator for the entire unit group.
Note that although $\Z$ has 2 distinct generators as a cyclic group, by \Cref{prop:type3degs} we may distinguish the unit satisfying $\deg a = \deg b + \deg h - \deg g$ as fundamental (as opposed to its inverse).
Moreover, via the isomorphism 
\[
(\F_2[x,y]/(y^2 + gy + h))^\times \xrightarrow{\sim} \Z
\]
which sends the fundamental unit to $1 \in \Z$, we may identify units belonging to case (I) in \Cref{prop:type3degs} with the positive integers in $\Z$, and those of case (II) with the negative integers.
See \Cref{prop:hDividesg} for an explicit special case of this.
\end{remark}

We note that there is a simple argument showing that the unit groups under consideration are (almost always) torsionfree:

\begin{proposition} \label{prop:torsion}
Let $R = \F_2[x,y]/(y^2 + gy + h)$ for some $g, h \in \F_2[x]$.
If $g \ne 0, 1$, then $R^\times$ is torsionfree.
\end{proposition}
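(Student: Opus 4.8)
The plan is to prove directly that every torsion element of $R^\times$ is trivial, bypassing any need to establish that $R$ is reduced or to invoke structural results such as Rosenlicht's theorem. Given a torsion unit $u$ of order $n$, I would write $n = 2^k m$ with $m$ odd and handle the odd part and the $2$-power part separately; tellingly, the two parts will call on the two excluded values of $g$ in completely separate ways.

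The computational engine for both parts is the explicit form of the squaring (Frobenius) map. Using $y^2 = gy + h$, for any $c = a + by$ one computes
\[
c^2 = a^2 + b^2 y^2 = (a^2 + b^2 h) + b^2 g\, y,
\]
so squaring acts on the coordinate pair by $(a,b) \mapsto (a^2 + b^2 h,\ b^2 g)$. Since the second coordinate evolves under $b \mapsto b^2 g$ alone, an easy induction shows that the $y$-coefficient of $c^{2^d}$ equals $b^{2^d} g^{2^d - 1}$ for every $d \ge 1$. I would first record the key lemma: if $g \ne 1$ and $c^{2^d} = c$ for some $d \ge 1$, then $c \in \F_2$. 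Indeed, comparing $y$-coefficients forces $b^{2^d} g^{2^d - 1} = b$; if $b \ne 0$, cancelling $b$ in the domain $\F_2[x]$ gives $(bg)^{2^d - 1} = 1$, so $bg$ is a unit and $g = 1$, a contradiction. Hence $b = 0$, and then $a^{2^d} = a$ forces $a \in \F_2$ by a degree count.

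For the odd part, set $v := u^{2^k}$, so that $v^m = 1$. Choosing $d$ with $2^d \equiv 1 \pmod m$ (possible since $m$ is odd) yields $v^{2^d} = v$, so the lemma applies and $v \in \F_2$; being a unit, $v = 1$, i.e. $u^{2^k} = 1$. For the $2$-power part it suffices to treat the base relation $w^2 = 1 \Rightarrow w = 1$: writing $w = a + by$, the identity $w^2 = 1$ forces $b^2 g = 0$, and since $g \ne 0$ and $\F_2[x]$ is a domain we get $b = 0$, hence $a^2 = 1$ and $a = 1$. Applying this repeatedly to $u^{2^{k-1}}, u^{2^{k-2}}, \ldots, u$ then gives $u = 1$, and combining the two parts finishes the proof.

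The hypotheses thus separate cleanly: $g \ne 1$ kills odd torsion (through the lemma), while $g \ne 0$ kills $2$-power torsion (through the order-$2$ computation). I expect the main subtlety to be organizational rather than technical, namely recognizing that the Frobenius-fixed-point lemma only applies after the $2$-power part of the order has been stripped off — a unipotent unit of the form $1 + (\text{nilpotent})$ is $2$-power torsion yet is fixed by no power of Frobenius, so the two regimes genuinely demand different arguments and different hypotheses. The only computation to carry out with care is the iterated $y$-coefficient formula $b^{2^d} g^{2^d - 1}$, but this is a routine induction.
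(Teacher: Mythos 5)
Your proof is correct and takes essentially the same route as the paper: both rest on the iterated-Frobenius formula $(a+by)^{2^d} = a' + b^{2^d}g^{2^d-1}y$, cancel $b$ to deduce $(bg)^{2^d-1}=1$ and hence the contradiction $g=1$ for the odd part, and use $b^2g = 0$ (so $g \ne 0$ forces $b = 0$) for order $2$. The only difference is organizational: the paper reduces to a torsion element of prime order $p$ and splits on $p=2$ versus $p$ odd via Fermat's little theorem, whereas you decompose the order as $2^k m$ and strip the odd and $2$-power parts in sequence --- the underlying computations are identical.
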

\begin{proof}
First, we show by induction that for any element $a + by \in R$ and any $n \in \N$,
\begin{equation} \label{eq:power}
\exists a' \in \F_2[x] \;\; \text{ s.t. } \; (a + by)^{2^n} = a' + b^{2^n}g^{2^n-1}y.
\end{equation}
The base case $n = 0$ holds by taking $a' = a$.
Now if \ref{eq:power} holds for some $n \in \N$, then
\begin{align*}
(a + by)^{2^{n+1}} = (a' + b^{2^n}g^{2^n-1}y)^2 = (a')^2 + b^{2^{n+1}}g^{2^{n+1}-2}h + b^{2^{n+1}}g^{2^{n+1}-1}y
\end{align*}
so \ref{eq:power} holds for $n+1$ as well.

Next, if $R^\times$ was not torsionfree, then there exists a nontrivial torsion unit $a + by$ of prime order, say $p$.
We have $p \ne 2$, as $(a + by)^2 = a^2 + b^2h + b^2gy \ne 1$ (since $b^2g \ne 0$).
Thus $p$ is an odd prime $\implies 2^{p-1} \equiv 1 \pmod p$, so by \ref{eq:power} there exists $a' \in \F_2[x]$ with $a + by = (a + by)^{2^{p-1}} = a' + b^{2^{p-1}}g^{2^{p-1}-1}y$.
Equating coefficients of $y$ gives $b = b^{2^{p-1}}g^{2^{p-1}-1} \implies 1 = (bg)^{2^{p-1}-1} \implies g = 1$, a contradiction.
\end{proof}

In addition to being of lowest degree, there are other conditions that the fundamental unit must satisfy, which are extremely useful for computational purposes.
For $p, f \in \F_2[x]$ with $p$ irreducible, let $\nu_p(f)$ denote the multiplicity of $p$ in the prime factorization of $f$, i.e. $\nu_p(f) := \max \{ i \ge 0 : p^i \mid f \}$, which satisfies the \emph{ultrametric inequality} $\nu_p(f + g) \ge \min \{ \nu_p(f), \nu_p(g) \}$, with equality if $\nu_p(f) \ne \nu_p(g)$.

\begin{proposition} \label{prop:fundUnitConstraints}
If $a + by \in (\F_2[x,y]/(y^2 + gy + h))^\times$ is a fundamental unit with $\deg b > 0$, then (with notation as in \Cref{def:sqpsqf}):
\begin{enumerate}
\item $g$ does not divide $b$. Equivalently, there exists an irreducible factor $p$ of $\frac{g}{b_r}$ such that $\nu_p(b_s) < \frac{1}{2} \nu_p \left( \frac{g}{b_r} \right)$.
\item If $g$ is irreducible, then $b$ is a square.
\item If $p$ is an irreducible factor of $g$, and there exist $h_1, h_2 \in \F_2[x]$ such that $p \nmid h_1$ and $h = h_1p + h_2^2$, then $\nu_p(b) \ge \nu_p(g) - 1$.
\end{enumerate}
\end{proposition}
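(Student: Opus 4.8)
The three parts are largely independent, and I would prove them in the order (1), (2), (3), since (2) follows almost immediately from (1). The equivalence asserted \emph{within} (1) is a routine $\nu_p$-bookkeeping: using $b_r \mid g$ (from \Cref{prop:fundEq2}(1)), $\nu_p(b) = \nu_p(b_r) + 2\nu_p(b_s)$, and $\nu_p(g/b_r) = \nu_p(g) - \nu_p(b_r)$, one checks that for every irreducible $p$ the inequality $\nu_p(g) \le \nu_p(b)$ is equivalent to $\nu_p(g/b_r) \le 2\nu_p(b_s)$; negating over all $p$ gives exactly the stated condition (and any witnessing $p$ automatically divides $g/b_r$). So the real content of (1) is the divisibility statement $g \nmid b$.

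For that claim the plan is to argue by contradiction: assuming $g \mid b$, I would produce a nontrivial unit of strictly smaller degree sum, contradicting fundamentality. Writing $b = g b'$ and substituting into $a^2 + abg + b^2 h = 1$ yields, in characteristic $2$, the identity $(a+1)^2 = g^2 b'(a + b' h)$. Hence $g \mid a + 1$, and after cancelling $g^2$ we get $\alpha^2 = b'(a + b'h)$ for some $\alpha$. Since $\gcd(a,b) = 1$ forces $\gcd(b', a + b'h) = 1$, unique factorization in $\F_2[x]$ (whose units are trivial) splits this product of coprimes into squares $b' = \beta^2$ and $a + b'h = \gamma^2$. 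Setting $v := \gamma + \beta y$, a direct expansion using $y^2 = gy + h$ gives $v^2 = (\gamma^2 + \beta^2 h) + \beta^2 g\, y = a + by$. Thus $v^2$ is a unit, so $v$ is itself a unit (it is nontrivial since $\beta \ne 0$), and from $Q(\gamma,\beta) = 1$ together with $a = \gamma^2 + \beta^2 h$ one reads off $a + 1 = \gamma\beta g$. For $a \ne 1$, comparing degrees via $\deg b = \deg g + 2\deg\beta$ and $\deg a = \deg g + \deg\beta + \deg\gamma$ then shows $\deg\gamma + \deg\beta < \deg a + \deg b$, the desired contradiction.

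Part (2) is then immediate: since $b_r$ is the squarefree part of $b$ and $b_r \mid g$ with $g$ irreducible, we have $b_r \in \{1, g\}$; but $b_r = g$ would give $g = b_r \mid b$, contradicting (1), so $b_r = 1$ and $b = b_s^2$ is a square. For part (3) the idea is to complete the square modulo $p$. Using $h = h_1 p + h_2^2$, adding $b^2 h_2^2$ to the rearranged equation $a^2 + 1 = abg + b^2 h$ collapses the $h_2^2$ terms and yields the clean identity $(a + b h_2 + 1)^2 = abg + b^2 h_1 p$. Reducing mod $p$ (where $p \mid g$) shows $p \mid a + bh_2 + 1$, so the left side has \emph{even} and positive $p$-adic valuation. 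Now suppose for contradiction $\nu_p(b) \le \nu_p(g) - 2$ (the claim is vacuous when $\nu_p(g) = 1$, so assume $\nu_p(g) \ge 2$). A short valuation check rules out $\nu_p(b) = 0$, after which $\gcd(a,b)=1$ forces $\nu_p(a) = 0$; then the right-hand terms have valuations $\nu_p(abg) = \nu_p(b) + \nu_p(g)$ and $\nu_p(b^2 h_1 p) = 2\nu_p(b) + 1$, and the hypothesis $\nu_p(b) \le \nu_p(g)-2$ makes the former strictly larger, so the ultrametric inequality forces the right side to have \emph{odd} valuation $2\nu_p(b)+1$ — contradicting the even left side.

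The main obstacle is the degree comparison closing part (1): the square-root construction is clean, but one must verify the strict inequality $\deg\gamma + \deg\beta < \deg a + \deg b$ carefully, including the degenerate case $a = 1$ (which forces $h = 1$ and $b = g$, and exhibits $y$ itself as a smaller unit). An alternative, if one invokes the structural fact from \Cref{rem:uniquenessFundUnit} that the unit group is infinite cyclic, is to note directly that a fundamental unit is a generator and hence cannot equal the nontrivial square $v^2$; but since that fact is recorded there only for Type 3 representatives, the self-contained degree argument is preferable here.
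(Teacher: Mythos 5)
Your proposal is correct, and for the heart of the statement — the claim $g \nmid b$ in part (1) — it takes a genuinely different route from the paper. Parts (2), (3), and the valuation-theoretic equivalence inside (1) coincide with the paper's arguments (the paper's (3) is exactly your identity with $pb$ factored out of the right-hand side, followed by the same two-case ultrametric analysis). For (1), the paper stays inside the machinery of \Cref{prop:fundEq2}: from $Q(c, b_s) = g' := g/b_r$ with $c = (1+a)/(b_r b_s)$, the hypothesis $g \mid b$ gives $(g')_r(g')_s \mid b_s$ and $(g')_r(g')_s \mid c$, and dividing the equation by $((g')_r(g')_s)^2$ leaves $(g')_r \cdot Q\bigl(c/((g')_r(g')_s),\, b_s/((g')_r(g')_s)\bigr) = 1$, exhibiting a nontrivial unit of strictly smaller degree sum. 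You instead show that $g \mid b$ forces the fundamental unit to \emph{be a perfect square} of a nontrivial unit: splitting the coprime product $\alpha^2 = b'(a + b'h)$ into squares $b' = \beta^2$, $a + b'h = \gamma^2$ and checking $(\gamma + \beta y)^2 = a + by$. These are secretly the same unit — your construction shows in passing that $g/b_r$ is a perfect square when $g \mid b$, and your pair $(\gamma, \beta)$ agrees with the paper's — but your derivation is more conceptual: it makes visible \emph{why} $g \mid b$ is incompatible with fundamentality (a generator of the unit group cannot be a square, consistent with $R^\times \cong \Z$), whereas the paper's is a formal division argument. The price is the degree bookkeeping via $a + 1 = \gamma\beta g$: there you should rule out $a = 0$ as well as $a = 1$ before asserting $\deg a = \deg(a+1)$ — this is immediate, since $a = 0$ forces $b^2 h = 1$ and hence $\deg b = 0$, contradicting the hypothesis — after which the strict inequality $\deg \gamma + \deg \beta < \deg a + \deg b$ follows from $\deg g + \deg b > 0$. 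Your handling of the degenerate case $a = 1$ (forcing $h = 1$, $b = g$, with $y$ a smaller unit) is correct, and your choice to keep the argument self-contained rather than invoke \Cref{rem:uniquenessFundUnit} matches the paper's own standard of rigor.
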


\begin{proof}
(1): Set $g' := \frac{g}{b_r}$.
By \Cref{lem:div}, $g \mid b \iff g' \mid \frac{b}{b_r} = b_s^2 \iff (g')_r(g')_s \mid b_s$.
If this is the case, then setting $c := \frac{1 + a}{b_r b_s}$, by \Cref{prop:fundEq2}(1) we have
\[
Q(c, b_s) = c^2 + cb_s g + b_s^2h = g'
\]
from which we see that $g' \mid c^2 \implies (g')_r (g')_s \mid c$.
Then $((g')_r(g')_s)^2 \mid Q(c, b_s)$, so
\begin{align*}
((g')_r(g')_s)^2 \left( \Big( \frac{c}{(g')_r(g')_s} \Big)^2 + \frac{c}{(g')_r(g')_s} \frac{b_s}{(g')_r(g')_s} g + \Big( \frac{b_s}{(g')_r(g')_s} \Big)^2 h \right) &= g' \\
\implies (g')_r \left( \Big( \frac{c}{(g')_r(g')_s} \Big)^2 + \frac{c}{(g')_r(g')_s} \frac{b_s}{(g')_r(g')_s} g + \Big( \frac{b_s}{(g')_r(g')_s} \Big)^2 h \right) &= 1
\end{align*}
hence $Q \Big( \dfrac{c}{(g')_r(g')_s}, \dfrac{b_s}{(g')_r(g')_s} \Big) = 1$ gives a unit of strictly smaller degree than the fundamental unit (since $\deg b > 0$), a contradiction.

The second statement in (1) follows since $\nu_p(b_s) \ge \frac{1}{2} \nu_p(g')$ for all prime factors $p$ of $g' \iff g' \mid b_s^2 \iff g \mid b$.

(2): Since $b_r \mid g$ and $g$ is irreducible, we must have either $b_r = g$ (which is ruled out by (1)), or $b_r = 1$, in which case $b = b_s^2$.

(3): The statement is true if $\nu_p(g) = 1$, so we may assume $\nu_p(g) > 1$.
Substituting $h = h_1p + h_2^2$ into \Cref{eq:fundEq} yields
\[
(1 + a + h_2b)^2 = pb \Big( a \Big( \frac{g}{p} \Big) + bh_1 \Big)
\]
which shows that $\nu_p(pb( a ( \frac{g}{p} ) + bh_1)) = 1 + \nu_p(b) + \nu_p(a ( \frac{g}{p} ) + bh_1)$ is even.
If $\nu_p(b) = 0$, then $\nu_p(a ( \frac{g}{p} ) + bh_1) = 0$ (as $\nu_p(a (\frac{g}{p})) > 0$) $\implies 1$ is even, a contradiction.
Thus $\nu_p(b) > 0$, which implies $\nu_p(a) = 0$ (recall that $\gcd(a, b) = 1$).
Now if $\nu_p(b) < \nu_p(g) - 1 = \nu_p(a (\frac{g}{p}))$, then by the ultrametric (in)equality $\nu_p(a ( \frac{g}{p} ) + bh_1) = \nu_p(bh_1) = \nu_p(b)$, hence $1 + 2 \nu_p(b)$ is even, a contradiction.
\end{proof}

\begin{remark} \label{rem:formOfh}
i) The hypothesis on $h$ in \Cref{prop:fundUnitConstraints}(3) is quite mild: since $\F_2[x]/(p)$ is a finite field of characteristic $2$, every element is a square (i.e. the Frobenius map is surjective).
Thus for instance if $p = x$, then $h$ satisfies the hypothesis if and only if the linear coefficient of $h$ ($= \frac{dh}{dx} \big|_{x = 0}$) is nonzero.

ii) In fact, the proof of \Cref{prop:fundUnitConstraints}(3) holds for any nontrivial unit.
However, if $a + by$ is the fundamental unit, and the inequalities $\nu_p(b) \ge \nu_p(g) - 1$ hold for all primes $p$ dividing $g$, then by \Cref{prop:fundUnitConstraints}(1), one of these inequalities must be an equality.
\end{remark}

\section{Special cases}

\subsection{\texorpdfstring{$g, h$}{g, h} constant}
In this subsection we consider the most special cases, where one of $g, h$ is constant, i.e. has degree $-\infty$ or $0$.
We start with the case $g = 0$:

\begin{proposition} \label{prop:gZero}
For any $h \in \F_2[x]$,
\[
(\F_2[x,y]/(y^2 + h))^\times \cong \begin{cases} (\F_2[x], +), &\quad \exists f \in \F_2[x] \text{ with } h = f^2 \\
\{ 1 \}, &\quad \text{else}
\end{cases}
\]
with fundamental unit $(1 + f) + y$ if $h = f^2$.
\end{proposition}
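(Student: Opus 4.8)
The plan is to invoke \Cref{prop:fundEq} to turn the question into counting solutions of the fundamental equation, and then exploit the special structure forced by $g=0$. Setting $g=0$ collapses \eqref{eq:fundEq} to $a^2 + b^2 h = 1$, with no cross term. The governing observation is the ``freshman's dream'' identity $a^2 + b^2 f^2 = (a+bf)^2$ valid in characteristic $2$; this is precisely what produces the square/non-square dichotomy in the statement.

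First I would dispose of the non-square case. Suppose $(a,b)$ solves $a^2 + b^2 h = 1$ with $b \neq 0$. Rewriting as $b^2 h = 1 + a^2 = (1+a)^2$ shows $b^2 \mid (1+a)^2$, hence $b \mid (1+a)$ in the UFD $\F_2[x]$ (comparing prime multiplicities; cf.\ \Cref{lem:div}). Writing $c := \frac{1+a}{b} \in \F_2[x]$ then gives $b^2 h = (bc)^2$, so $h = c^2$ is a perfect square. Contrapositively, if $h$ is not a square then $b = 0$, which forces $a = 1$, so only the trivial unit exists.

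Next, the square case $h = f^2$. Here $a^2 + b^2 f^2 = (a+bf)^2 = 1$, and since $\F_2[x]$ is a domain of characteristic $2$ the unique square root of $1$ is $1$, so $a + bf = 1$. Thus the solutions are exactly the pairs $(1 + bf,\, b)$ for $b \in \F_2[x]$, giving a bijection $b \leftrightarrow u_b := (1+bf) + by$ between $\F_2[x]$ and the unit group. To upgrade this bijection to a group isomorphism $(\F_2[x], +) \xrightarrow{\sim} R^\times$, I would expand $u_b \cdot u_{b'}$ directly, using $y^2 = h = f^2$: both the constant and the $y$-coefficient simplify (all mixed $bb'f^2$ and $bb'f$ terms cancel in characteristic $2$) to yield $u_b u_{b'} = u_{b+b'}$. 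This explicit multiplication is the one genuine computation in the proof and the step I would treat most carefully; as a consistency check, $g=0$ makes every unit its own inverse by the inverse formula in \Cref{prop:fundEq}, matching the fact that $(\F_2[x], +)$ is $2$-torsion.

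Finally, to identify the fundamental unit I would minimize $\deg a + \deg b = \deg(1+bf) + \deg b$ over $b \neq 0$; the choice $b = 1$ gives $(1+f) + y$, and a short degree comparison shows that any $b$ with $\deg b \geq 1$ yields a strictly larger value of $\deg a + \deg b$, so $(1+f)+y$ is the fundamental unit. The only subtlety worth flagging is that when $h = f^2$ the defining polynomial $y^2 + h = (y+f)^2$ is not irreducible and $R$ is non-reduced, so this case falls outside the running irreducibility hypothesis of \Cref{sec:eqns}; the argument above, however, is self-contained and requires no such assumption.
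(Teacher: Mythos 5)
Your proposal is correct, and it takes a genuinely more self-contained route than the paper. The paper leans on the orbit machinery of \Cref{sec:orbits}: since $\phi_f$ sends $(0,h)$ to $(0, h+f^2)$, it reduces to the only two possible minimal representatives with $g=0$, namely $(0,h')$ with $\deg h'$ odd (Type 1, where $b^2h' = (1+a)^2$ is impossible for $b \neq 0$ by degree parity) and $(0,0)$ (Type 2, where the isomorphism $r \mapsto 1+ry$ onto $(\F_2[x,y]/(y^2))^\times$ is immediate because $y^2=0$), and then transports the fundamental unit $1+y$ back along $\phi_f$ to get $(1+f)+y$. You instead work directly with $a^2+b^2h=1$ for arbitrary $h$: your UFD divisibility argument ($b^2 \mid (1+a)^2 \Rightarrow b \mid 1+a \Rightarrow h = c^2$) dispatches \emph{all} non-square $h$ at once, with no orbit reduction and no appeal to \Cref{thm:minRep}, and is strictly stronger than the paper's parity argument (which only applies after normalization to odd degree); in the square case you pay for this independence with the explicit multiplication check $u_b u_{b'} = u_{b+b'}$ at $y^2 = f^2$, which the paper gets for free by doing the computation at $y^2=0$ and conjugating by the ring automorphism $\phi_f$. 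Your degree minimization for the fundamental unit and the observation that \Cref{prop:fundEq} needs no irreducibility hypothesis (indeed the paper states it for arbitrary $g,h$, so the non-reducedness of $(y+f)^2$ is harmless) are both sound. In short: the paper's proof is shorter given its existing machinery and fits its classification narrative; yours is elementary, self-contained, and makes the group structure and the square/non-square dichotomy visible in a single equation.
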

\begin{proof}
If $g = 0$, then $\phi_f(y^2 + h) = y^2 + h + f^2$ (recall that $\phi_f$ is defined in \ref{def:phi}), and the only possible minimal representatives are (i): $(0, h)$ with $\deg h$ odd, or (ii): $(0, 0)$, which occurs if and only if $h = f^2$ for some $f \in \F_2[x]$.

In case (i) (of Type 1): \Cref{eq:fundEq} becomes $a^2 + b^2h = 1 \implies b^2h = (1 + a)^2$, which is impossible if $b \ne 0$ and $\deg h$ is odd, hence the unit group is trivial.

In case (ii) (of Type 2): there is an isomorphism $(\F_2[x], +) \xrightarrow{\sim} (\F_2[x,y]/(y^2))^\times$ given by $r \mapsto 1 + ry$.
Thus $\F_2[x,y]/(y^2)$ has fundamental unit $1 + y$, which is identified with $(1 + f) + y \in \F_2[x,y]/(y^2 + f^2)$ under $\phi_f$.
\end{proof}

In view of \Cref{prop:gZero}, we may implicitly assume in what follows that $g \ne 0$.
Next, we consider $h = 0$:

\begin{proposition} \label{prop:hZero}
For any $0 \ne g \in \F_2[x]$, $(\F_2[x,y]/(y^2 + gy))^\times = \{ 1 \}$.
\end{proposition}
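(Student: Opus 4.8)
The plan is to apply \Cref{prop:fundEq} with $h = 0$ and reduce the whole problem to an elementary factorization in the integral domain $\F_2[x]$. First, I would note that by \Cref{prop:fundEq}, units of $\F_2[x,y]/(y^2 + gy)$ are in bijection with solutions $(a, b) \in \F_2[x] \times \F_2[x]$ of \Cref{eq:fundEq}, which upon setting $h = 0$ becomes
\[
a^2 + abg = a(a + bg) = 1.
\]
Next, I would use that $\F_2[x]$ is a domain with $\F_2[x]^\times = \{1\}$: a product of two elements equal to $1$ forces each factor to equal $1$, so $a = 1$ and $a + bg = 1$, whence $bg = 0$. Since $g \ne 0$, the absence of zero divisors gives $b = 0$, so the only solution is the trivial one $(a, b) = (1, 0)$, and the unit group is trivial.

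The point that makes this clean is that, although $h = 0$ makes $y^2 + gy = y(y + g)$ reducible -- so that $R$ itself is \emph{not} a domain -- the fundamental equation \Cref{eq:fundEq} is an identity in $\F_2[x]$, which \emph{is} a domain. This lets me exploit the factorization $a(a + bg) = 1$ directly, sidestepping the reducibility of $R$.

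For robustness I would also keep in mind a geometric alternative: the minimal primes of $(y^2 + gy)$ are $(y)$ and $(y + g)$, each with quotient isomorphic to $\F_2[x]$, so $R$ embeds into $\F_2[x] \times \F_2[x]$ (exactly the product-of-components reduction from \Cref{sec:motivation}), whose unit group is $\{1\} \times \{1\} = \{1\}$. I do not anticipate any serious obstacle here; the only mild subtlety worth flagging is that this is one of the reducible cases excluded by the running ``$I$ prime'' hypothesis, but neither argument relies on irreducibility.
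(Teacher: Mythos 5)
Your proposal is correct, and your primary argument takes a genuinely different route from the paper's. The paper's proof is, in fact, exactly your ``geometric alternative'': since $g \ne 0$, $(y^2 + gy) = (y) \cap (y+g)$ is an intersection of distinct prime ideals, so the unit group embeds into $(\F_2[x,y]/(y))^\times \times (\F_2[x,y]/(y+g))^\times \cong \{1\} \times \{1\}$, and that is the entire proof. Your main argument instead specializes \Cref{prop:fundEq} to $h = 0$ and factors $a(a + bg) = 1$ in the domain $\F_2[x]$, whose unit group is trivial, forcing $a = 1$, $bg = 0$, and hence $b = 0$. This is complete: as you correctly flag, \Cref{prop:fundEq} is stated and proved for arbitrary $g, h$ (its proof is a coefficient computation over $\F_2[x]$ that never uses irreducibility of $y^2 + gy + h$), so the reducibility of $R$ is no obstacle. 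As for what each approach buys: the paper's argument is a one-liner that fits the product-of-components philosophy of \Cref{sec:motivation} and generalizes immediately to any reducible defining polynomial whose components have trivial unit groups (it is reused in this way at the end of the paper for $y^3 + f^3$); yours stays entirely within the quadratic-form framework of \Cref{sec:eqns} and shows how degenerate instances of \Cref{eq:fundEq} collapse to elementary factorization in $\F_2[x]$, which is the same style of argument the paper deploys in \Cref{prop:gZero} and \Cref{prop:hDividesg}.
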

\begin{proof}
For $g \ne 0$, $(y^2 + gy) = (y) \cap (y + g)$ is an intersection of distinct prime ideals, so $(\F_2[x,y]/(y^2 + gy))^\times \hookrightarrow (\F_2[x,y]/(y))^\times \times (\F_2[x,y]/(y+g))^\times \cong \{ 1 \} \times \{ 1 \} \cong \{ 1 \}$.
\end{proof}

The next case of interest is $g = 1$:

\begin{proposition} \label{prop:gOne}
For any $h \in \F_2[x]$,
\[
(\F_2[x,y]/(y^2 + y + h))^\times \cong \begin{cases} \Z \!/3\!\Z, &\quad \exists f \in \F_2[x] \text{ with } h = f^2 + f + 1 \\
\{ 1 \}, &\quad \text{else}
\end{cases}
\]
with fundamental unit $f + y$ if $h = f^2 + f + 1$.
\end{proposition}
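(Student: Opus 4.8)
The plan is to reduce the whole statement to the single ring $\F_2[x,y]/(y^2 + y + 1)$ by exploiting the $\phi_f$-invariance of the unit group established in \Cref{sec:orbits}, and then to compute that one unit group directly. Since $g = 1$ here, the action is $\phi_f \cdot (1, h) = (1, h + f + f^2)$, so the orbit of $(1, h)$ is $\{(1, h + \wp(f)) : f \in \F_2[x]\}$, where $\wp(f) := f^2 + f$. First I would record that in characteristic $2$ the map $\wp$ is additive, since $\wp(f + f') = (f + f')^2 + (f + f') = \wp(f) + \wp(f')$; thus $\operatorname{im}\wp$ is an $\F_2$-subspace of $\F_2[x]$, and a degree count (if $\deg f > 0$ then $\deg \wp(f) = 2\deg f$ is even and positive, while $\wp(0) = \wp(1) = 0$) shows $1 \notin \operatorname{im}\wp$. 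Consequently $h = f^2 + f + 1$ for some $f$ if and only if $(1, h)$ lies in the orbit of $(1, 1)$, and $h = f^2 + f$ for some $f$ if and only if $(1, h)$ lies in the orbit of $(1, 0)$; these two orbits are the distinct cosets $1 + \operatorname{im}\wp$ and $\operatorname{im}\wp$, hence disjoint.

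Next I would invoke \Cref{thm:minRep}: with $g = 1$ fixed (so $\deg g = 0$), the only possible minimal representatives are Type 3 with $h = 0$, Type 2 with $h = 1$, and Type 1 with $\deg h$ odd. As $(1, 0)$ and $(1, 1)$ are the only Type 3 and Type 2 representatives available, the trichotomy above becomes: $(1, h)$ is $\phi$-equivalent to $(1, 1)$ (when $h = f^2 + f + 1$), to $(1, 0)$ (when $h = f^2 + f$), or to a Type 1 representative (otherwise). In the second case \Cref{prop:hZero} gives a trivial unit group and in the third case \Cref{thm:type12} does, so the ``else'' branch of the statement is settled. Because the $\phi_f$ induce isomorphisms of unit groups, it remains only to treat the orbit of $(1, 1)$.

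For the ring $R_0 := \F_2[x,y]/(y^2 + y + 1)$ I would compute $R_0^\times$ directly from \Cref{eq:fundEq}, which with $g = h = 1$ reads $a^2 + ab + b^2 = 1$. A degree argument forces $a, b$ to be constant: if $d := \max\{\deg a, \deg b\} > 0$, then either $\deg a \ne \deg b$, in which case the unique top term $a^2$ or $b^2$ gives degree $2d$, or $\deg a = \deg b = d$, in which case the three top terms contribute leading coefficient $1 + 1 + 1 = 1 \ne 0$, again giving degree $2d$; either way $\deg(a^2 + ab + b^2) = 2d > 0$, contradicting the right-hand side $1$. Hence $a, b \in \F_2$, the nontrivial solutions are $(0,1)$ and $(1,1)$, and $R_0^\times = \{1, y, 1 + y\}$ is cyclic of order $3$, i.e. $R_0^\times \cong \Z\!/3\!\Z$. (Alternatively one recognizes $R_0 \cong \F_4[x]$, with units $\F_4^\times \cong \Z\!/3\!\Z$.) Since $\phi_f$ induces a ring isomorphism $R_0 \xrightarrow{\sim} \F_2[x,y]/(y^2 + y + f^2 + f + 1)$ sending the generator $y$ to $\phi_f(y) = f + y$, the element $f + y$ generates $(\F_2[x,y]/(y^2 + y + f^2 + f + 1))^\times$; a one-line degree check (when $\deg f \ge 1$ both nontrivial units have degree sum $\deg f$, and $f = 0$ recovers $h = 1$ with fundamental unit $y$) confirms $f + y$ is a fundamental unit, as claimed. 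One can also verify $f+y$ satisfies \Cref{eq:fundEq} outright: $Q(f,1) = f^2 + f + h = 1$.

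The calculations are all short; the point needing genuine care is the bookkeeping of which orbit a given $h$ lands in, i.e. checking that the complement of $1 + \operatorname{im}\wp$ decomposes cleanly into the (trivial) orbit of $(1,0)$ and the Type 1 orbits with no overlap. The additivity of $\wp$ together with $1 \notin \operatorname{im}\wp$ are exactly what make this partition disjoint and exhaustive; once those are in hand, the identification of the nontrivial case with $(1,1)$ and the remaining appeals to \Cref{prop:hZero,thm:type12} are routine, and the only residual subtlety is that ``fundamental unit'' is not quite canonical in the torsion setting, which the degree check above dispatches.
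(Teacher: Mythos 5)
Your proposal is correct and takes essentially the same route as the paper: reduce via the $\phi_f$-orbits to the three possible minimal representatives $(1,h)$ with $\deg h$ odd, $(1,0)$, and $(1,1)$, dispatch the first two by \Cref{thm:type12} and \Cref{prop:hZero}, and then identify the unit group in the remaining case, transporting the generator $y$ to $f+y$ via $\phi_f$. The only cosmetic difference is that you compute $(\F_2[x,y]/(y^2+y+1))^\times$ by directly solving $a^2+ab+b^2=1$ with a degree argument, where the paper invokes the isomorphism with $\F_4[x]$ (an alternative you also mention); your explicit verification that $1 \notin \operatorname{im}(f \mapsto f^2+f)$, making the case distinction disjoint, is a nice touch the paper leaves implicit.
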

\begin{proof}
If $g = 1$, then $\phi_f(y^2 + y + h) = y^2 + y + h + f + f^2$, and the only possible minimal representatives are (i): $(1, h)$ with $\deg h$ odd, (ii): $(1, 1)$, which occurs if and only if $h = f^2 + f + 1$ for some $f \in \F_2[x]$, and (iii): $(1, 0)$ ($\iff h = f^2 + f$ for some $f \in \F_2[x]$).

In cases (i) and (iii) (of Types 1 and 3): the unit group is trivial by \Cref{thm:type12} and \Cref{prop:hZero} respectively.

In case (ii) (of Type 2): we have $\F_2[x,y]/(y^2+y+1) \cong \F_4[x] \implies (\F_2[x,y]/(y^2+y+1))^\times \cong (\F_4[x])^\times \cong \F_4^\times$, which is isomorphic to $\Z\!/3\!\Z$ (and equals $\{ 1, y, y+1 \}$ as a set).
Thus $\F_2[x,y]/(y^2+y+1)$ has fundamental unit $y$, which is identified with $f + y \in \F_2[x,y]/(y^2 + y + f^2 + f + 1)$ under $\phi_f$.
\end{proof}

\Cref{prop:torsion,prop:gZero,prop:gOne} give a complete resolution to the question of possible torsion in $(\F_2[x,y]/(y^2 + gy + h))^\times$.
Note also that applying the algorithm in \Cref{rem:alg} in the case $g = 1$ gives, as a byproduct, an algorithm for determining when a polynomial $h \in \F_2[x]$ is of the form $f^2 + f$ for some $f \in \F_2[x]$ (and computes such an $f$ when it exists).
Finally, from \Cref{prop:gOne} we see that the condition $\deg b > 0$ in \Cref{prop:fundUnitConstraints}(1) is necessary.

The remaining case $h = 1$ is subsumed in the next subsection.


\subsection{\texorpdfstring{$h \mid g$}{h | g}} \label{ssec:hDividesg}
A useful special case is when $h$ divides $g$, since it may be analyzed completely in accordance with \Cref{rem:uniquenessFundUnit}, using only elementary arguments.
We may assume $h \ne g$, since $\phi_1(y^2 + gy + g) = y^2 + gy + 1$ (and $g = 1$ was treated in \Cref{prop:gOne}), so that $(g,h)$ is a Type 3 minimal representative.

\begin{proposition} \label{prop:hDividesg}
Let $R = \F_2[x,y]/(y^2 + gy + h)$ for some $g, h \in \F_2[x]$.
If $(g, h)$ is a Type 3 minimal representative and $h \mid g$, then $R^\times \cong \Z$, with fundamental unit
\[
\begin{cases}
y, \quad &h = 1 \\
1 + \frac{g}{h}y, \quad &\text{ else.}
\end{cases}
\]
\end{proposition}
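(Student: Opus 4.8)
The plan is to convert everything into the quadratic equation \Cref{eq:fundEq} via \Cref{prop:fundEq}, exhibit the claimed unit explicitly, and then certify that it is fundamental by a degree count. Write $k := g/h \in \F_2[x]$ (a polynomial precisely because $h \mid g$), so that $g = kh$. First I would verify directly that the candidate is a unit. When $h \ne 1$ one computes, using the quadratic form $Q$ of \Cref{eq:quadratic},
\[
Q(1, k) = 1 + kg + k^2 h = 1 + k(g + kh) = 1,
\]
since $g + kh = 2kh = 0$ in characteristic $2$; when $h = 1$ one simply checks $Q(0,1) = 1$, so that $y$ is a unit. Thus in either case a nontrivial unit exists.

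Next I would establish the group structure. Since $(g,h)$ is a Type 3 minimal representative with $h \ne g$ (so $g \ne 0, 1$ and $y^2 + gy + h$ is irreducible), \Cref{rem:uniquenessFundUnit} applies: $R^\times$ is either trivial or infinite cyclic, with a unique fundamental unit. Having produced a nontrivial unit, we conclude $R^\times \cong \Z$. It therefore remains only to show that the exhibited unit is fundamental, i.e.\ that it attains the minimum of $\deg a + \deg b$ over all nontrivial units.

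For $h = 1$ this is immediate and must be handled separately, because \Cref{prop:type3degs} requires $\deg h > 0$: the unit $y$ corresponds to $(a,b) = (0,1)$, whose degree sum $-\infty$ is the global minimum (and it is the unique unit with $a = 0$, since $Q(0,b) = b^2 = 1$ forces $b = 1$). For $h \ne 1$, so $\deg h \ge 1$, I would split any nontrivial unit into cases (I) and (II) of \Cref{prop:type3degs}. In case (I) one cannot have $a = 0$, since $Q(0,b) = b^2h = 1$ is impossible when $\deg h \ge 1$; hence $\deg a \ge 0$, and the relation $\deg a = \deg b + \deg h - \deg g$ forces $\deg b \ge \deg g - \deg h$, whence $\deg a + \deg b = 2\deg b + \deg h - \deg g \ge \deg g - \deg h$. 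In case (II) the relation $\deg a = \deg b + \deg g$ gives $\deg a + \deg b = 2\deg b + \deg g \ge \deg g > \deg g - \deg h$. So every nontrivial unit satisfies $\deg a + \deg b \ge \deg g - \deg h$, while the candidate $(1, k)$ lies in case (I) (as $0 = \deg k + \deg h - \deg g$) and attains $\deg a + \deg b = \deg g - \deg h$. Hence it is a fundamental unit, and by uniqueness from \Cref{rem:uniquenessFundUnit} it is \emph{the} fundamental unit generating $R^\times \cong \Z$.

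I expect the main obstacle to be the lower bound $\deg a + \deg b \ge \deg g - \deg h$ in case (I): the crux is ruling out $a = 0$ and extracting $\deg b \ge \deg g - \deg h$ from the case (I) degree relation, so that the minimum is attained exactly by the candidate with $\deg a = 0$. The other delicate point is simply remembering that the $h = 1$ branch falls outside the hypotheses of \Cref{prop:type3degs} and so must be disposed of by the direct degree argument above.
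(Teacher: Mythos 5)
Your proof is correct, but the route to $R^\times \cong \Z$ is genuinely different from the paper's. The half where you verify the candidate unit directly and certify fundamentality by the degree bound $\deg a + \deg b \ge \deg g - \deg h$ (via \Cref{prop:type3degs}, with $h = 1$ handled separately) is essentially the paper's own argument, which phrases the same point as ``a nontrivial unit is fundamental if and only if $\deg a$ is minimal'' and pins down the solutions of \Cref{eq:fundEq} with $a \in \{0,1\}$. The divergence is in the group structure: you invoke \Cref{rem:uniquenessFundUnit} (Rosenlicht's theorem) to conclude that the unit group of a Type 3 minimal representative is trivial or infinite cyclic, so that exhibiting one nontrivial unit settles everything. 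The paper instead proves this from scratch: it introduces the involutions $\sigma_1(a,b) = (a + bg, b)$ (inversion) and $\sigma_2$ (inversion followed by multiplication by the fundamental unit), shows that repeated application strictly decreases $\deg a + \deg b$ for any nontrivial unit until the descent terminates at the trivial or fundamental unit, concludes that every unit is an integer power of the fundamental one, and gets injectivity of $\Z \to R^\times$ from \Cref{prop:torsion}. What each approach buys: yours is shorter, but it rests on a remark that is only a proof sketch (citing Rosenlicht and \cite{CVZ}, and passing over geometric integrality after base change to $\overline{\F_2}$); the paper's descent is self-contained and elementary, which is the explicit purpose of the subsection --- the proposition is framed as an \emph{independent, elementary confirmation} of what \Cref{rem:uniquenessFundUnit} predicts, so a proof that cites that remark would forfeit that point. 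Two minor quibbles: your parenthetical irreducibility claim is true but unproved (a root $f \in \F_2[x]$ of $y^2 + gy + h$ would put $(g,0)$ in the orbit of $(g,h)$, contradicting minimality of $\deg h$ when $h \ne 0$), and $h \ne g$ needs no assumption since it is automatic from $\deg g > \deg h$.
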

\begin{proof}
If $(a,b)$ is any nontrivial solution to \Cref{eq:fundEq}, then $a = 0 \implies b^2h = 1 \iff b = h = 1$, and $a = 1 \implies b(g + bh) = 0 \iff g + bh = 0 \implies h \mid g$.
This shows that the fundamental unit exists and is of the desired form (since it follows from \Cref{prop:type3degs} that in the Type 3 case, a nontrivial unit is fundamental if and only if $\deg a$ is minimal).

It remains to show that the unit group is isomorphic to $\Z$, which we do by considering 2 symmetries of $R^\times$: for any unit $a + by \leftrightarrow (a, b)$, define
\begin{align*}
\sigma_1 &: (a, b) \mapsto (a + bg, b) \\
\sigma_2 &: (a, b) \mapsto \begin{cases} 
(b, a) &\quad h = 1 \\
(a, a \frac{g}{h} + b) &\quad \text{else.} \\
\end{cases}
\end{align*}
It is straightforward to check that $\sigma_1, \sigma_2$ are involutions of $(\F_2[x,y]/(y^2 + gy + h))^\times$.
In fact, $\sigma_1$ is inversion, while $\sigma_2$ is inversion followed by multiplication with the fundamental unit, i.e. $\sigma_2(a + by) = (a + by)^{-1}(u + vy)$, where $u + vy$ is the fundamental unit, and the product is taken in $R$.

We now claim that for any nontrivial unit $a + by$, we can always decrease the quantity $\deg a + \deg b$ by repeated applications of $\sigma_1$ and $\sigma_2$:
e.g. if $\deg a > \deg b$ ($\implies \deg a = \deg b + \deg g$ by \Cref{prop:type3degs}), then $\deg(a + bg) < \deg a$; and in the case $h \ne 1$, $\deg a = \deg b + \deg h - \deg g = \deg b - \deg \frac{g}{h} \implies \deg( a \frac{g}{h} + b) < \deg b$, while if $h = 1$ then one can always reduce to the previous case $\deg a > \deg b$ by $\sigma_2$.
Since this process terminates only when $\deg a + \deg b = -\infty$, i.e. at the trivial unit (or the fundamental unit in the case $h = 1$), this shows that every unit is an integer power of the fundamental unit.
Thus the group homomorphism
\begin{align*}
\Z &\to R^\times \\
k &\mapsto (u + vy)^k
\end{align*}
is surjective, and injective by \Cref{prop:torsion}, hence an isomorphism.
\end{proof}

\begin{remark}
i) Under the isomorphism $\Z \cong R^\times$ above, the maps $\sigma_1, \sigma_2$ correspond to the involutions $k \mapsto -k$, $k \mapsto 1 - k$ of $\Z$.

ii) When $h = 1$, the map $(a, b) \mapsto (a, ag + b)$ corresponds to the involution $k \mapsto 2 - k$ (which explains the necessity of the different formula for $\sigma_2$ when $h = 1$).

iii) The necessity of $\deg a > 0$ in \Cref{cor:degLowerBound} can also be seen from \Cref{prop:hDividesg}.

iv) As another consequence, one also has: if $R = \F_2[x,y]/(y^2 + gy + h)$ has a unit of the form $a + y$ (i.e. $b = 1$), then $R^\times \cong \Z$.
For then $h = a^2 + ag + 1$, so
\begin{align*}
\phi_{a}(y^2 + gy + h) &= y^2 + gy + (a^2 + ag + 1) + ag + a^2 \\
&= y^2 + gy + 1
\end{align*}
and the result follows from \Cref{prop:hDividesg}.
\end{remark}

\subsection{\texorpdfstring{$g^2 \mid h$}{g2 | h}}
Another simple case is when $g^2$ divides $h$, for which the unit group is always trivial (even if $(g, h)$ is not a minimal representative):

\begin{proposition} \label{prop:gDividesh2}
Let $g, h \in \F_2[x]$.
If $g^2 \mid h$ and $\deg g > 0$, then $(\F_2[x,y])/(y^2 + gy + h))^\times = \{ 1 \}$.
\end{proposition}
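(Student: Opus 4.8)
The plan is to assume a nontrivial unit $a + by$ exists (so $b \neq 0$ by \Cref{prop:fundEq}) and derive a contradiction by \emph{descent} to the already-resolved case $g = 1$ of \Cref{prop:gOne}. First I would clear away degenerate configurations. If $h = 0$, then \Cref{eq:fundEq} reads $a(a + bg) = 1$, forcing $a = 1$ and $bg = 0$, hence $b = 0$. If $h \neq 0$, then $g^2 \mid h$ with $\deg g > 0$ gives $\deg h \geq 2$, so $a = 0$ is impossible (it would force $b^2h = 1$, i.e. $\deg h = 0$). Thus I may assume $a, b, h$ are all nonzero. As a preliminary, I would show $\gcd(a, g) = 1$: any irreducible $p \mid g$ also satisfies $p \mid h$ (since $p \mid g \mid g^2 \mid h$), so if $p$ divided $a$ it would divide all three terms $a^2$, $abg$, $b^2h$ of \Cref{eq:fundEq}, forcing the absurdity $p \mid 1$.

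The heart of the argument is to rewrite \Cref{eq:fundEq} in characteristic $2$ as $(1+a)^2 = b(ag + bh)$. Writing $h = g^2 k$ and setting $w := a + bgk$, the right-hand side becomes $b \cdot g w$, so that $(1+a)^2 = bg \cdot w$. Using $\gcd(a,b) = 1$ (from \Cref{prop:fundEq}) together with $\gcd(a,g) = 1$, I would verify that $\gcd(w, bg) = 1$: a common irreducible factor of $w$ and $b$ (resp. of $w$ and $g$) would, via $a = w + bgk$, divide $a$, contradicting coprimality. Since $bg$ and $w$ are then coprime factors of the perfect square $(1+a)^2$ in the UFD $\F_2[x]$, each is itself a square, say $bg = s^2$ and $w = t^2$. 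Consequently $1 + a = st$ in the domain, and substituting $a = 1 + st$ into $t^2 = w = a + s^2 k$ yields
\[
t^2 + st + k\, s^2 = 1.
\]
This is exactly the statement that $(t, s)$ represents $1$ for the binary quadratic form attached to $(g', h') = (1, k)$; equivalently, $t + sy$ is a unit of $\F_2[x,y]/(y^2 + y + k)$.

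Finally I would invoke \Cref{prop:gOne}: the unit group of $\F_2[x,y]/(y^2 + y + k)$ is either trivial or $\Z/3\Z$, and in every case its elements are among $\{1, y, y+1\}$ up to the translation $\phi_f$, which fixes the coefficient of $y$. Hence the $y$-coefficient $s$ is a constant in $\F_2$. If $s = 0$ then $bg = s^2 = 0$ forces $b = 0$; if $s = 1$ then $bg = 1$ forces $\deg g = 0$. Both outcomes contradict our standing hypotheses ($b \neq 0$ and $\deg g > 0$), so no nontrivial unit can exist. I expect the main obstacle to be the bookkeeping in the descent: checking the coprimality $\gcd(w, bg) = 1$ that legitimizes extracting the square roots, and then correctly matching the resulting identity $t^2 + st + ks^2 = 1$ to the $g = 1$ instance so that \Cref{prop:gOne} applies. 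A pleasant feature of this route is that it never uses minimality of $(g, h)$, which is why the conclusion holds even for non-minimal representatives; the configurations with $2\deg g = \deg h$ could alternatively be dispatched directly by \Cref{thm:type12}, but the descent handles all cases uniformly.
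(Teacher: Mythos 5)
Your argument is correct, and it is in essence the paper's own (first) proof -- a descent to the case $g=1$ resolved by \Cref{prop:gOne} -- but executed with an avoidable detour. The paper's version is a one-line substitution: writing $h = g^2h'$ and setting $b' := bg$, the fundamental equation becomes $1 = a^2 + abg + b^2h = a^2 + ab' + (b')^2h'$, so $a + b'y$ is \emph{itself} a unit of $\F_2[x,y]/(y^2+y+h')$, and \Cref{prop:gOne} forces $b' = bg = 1$, contradicting $\deg g > 0$. You instead factor $(1+a)^2 = bg \cdot w$ with $w = a + bgk$, prove $\gcd(bg, w) = 1$, extract square roots $bg = s^2$, $w = t^2$, and exhibit the unit $t + sy$ of $\F_2[x,y]/(y^2+y+k)$. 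Note that $(t+sy)^2 = (t^2 + s^2k) + s^2y = a + bgy$ in that ring: your unit is exactly the square root of the unit the paper writes down directly, and all of your coprimality bookkeeping and the Frobenius step exist only to manufacture that square root. One point you should make explicit: before asserting that coprime factors of the square $(1+a)^2$ are themselves squares, you need $w \ne 0$ (equivalently $a \ne 1$), since otherwise the square is zero and the valuation argument collapses; this does follow from your own coprimality claim, because $w = 0$ would make every irreducible factor of $bg$ (and there is at least one, as $\deg g > 0$) a common factor of $w$ and $bg$. Finally, your closing observation that minimality of $(g,h)$ is never used applies equally to the paper's substitution; the paper also records a second, genuinely different proof that does pass through minimal representatives, using \Cref{thm:type12} and \Cref{prop:hZero}.
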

\begin{proof}
We give 2 different proofs.
For the first: write $h = g^2h'$ for some $h' \in \F_2[x]$, and set $b' := bg$.
Then $1 = a^2 + abg + b^2h = a^2 + ab' + (b')^2h'$, which we may interpret as \Cref{eq:fundEq} for $y^2 + y + h'$.
By \Cref{prop:gOne}, if there is a nontrivial unit, we must have $b' = 1$, but this implies $g = 1$, contradicting $\deg g > 0$.

Alternatively, by \Cref{thm:type12} and \Cref{prop:hZero}, it suffices to show that if the minimal representative for $(g, h)$ is Type 3, then it must be $(g, 0)$.
Changing notation, it is enough to show that if $(g, h)$ is a Type 3 minimal representative with $h \ne 0$, then there does not exist $f \in \F_2[x]$ such that $g^2$ divides $h + gf + f^2$.
Suppose that such an $f$ exists, and write $f = qg + r$ with $\deg r < \deg g$, so that $g^2 \mid (h + gf + f^2) \iff g^2 \mid (h + gr + r^2)$.
Since $\deg(h + gr + r^2) \le \max \{ \deg h, \deg g + \deg r, 2 \deg r \} < 2 \deg g = \deg(g^2)$, this can only happen if $h + gr + r^2 = 0$, i.e. $h = gr + r^2$.
Then $\deg(r(g + r)) = \deg r + \deg g = \deg h < \deg g$, which forces $r = 0$, so $h = 0$, a contradiction.
\end{proof}

\subsection{Type 3 general case}

The final case remaining is that of Type 3 minimal representatives with $h \ne 0$ (the case $h = 0$ being treated in \Cref{prop:hZero}).
To this end, we conjecture that all Type 3 minimal representatives with $h \ne 0$ have nontrivial unit group:

\begin{conjecture} \label{conj:type3}
Let $g, h \in \F_2[x]$.
If $\deg g > \deg h$ and $h \ne 0$, then $(\F_2[x,y]/(y^2 + gy + h))^\times \ne \{ 1 \}$.
\end{conjecture}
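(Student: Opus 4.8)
The plan is to recast the problem as a unit/norm problem for the quadratic function field cut out by $f := y^2 + gy + h$ and then produce a unit of infinite order via the function-field analogue of Dirichlet's unit theorem. By \Cref{prop:fundEq}, $R^\times$ is exactly the group of norm-$1$ elements $a + b\theta$ of the order $R = \F_2[x][\theta]$, where $\theta$ is a root of $f$ and $N(a + b\theta) = a^2 + abg + b^2h = Q(a,b)$; so it suffices to produce one such element with $b \ne 0$. First I would observe that the hypotheses force $f$ to be irreducible over $\F_2(x)$: a monic quadratic is reducible iff it factors as $(y+p)(y+q)$ with $p,q \in \F_2[x]$, giving $g = p+q$, $h = pq$, and with $h \ne 0$ both $p,q$ are nonzero, so $\deg h = \deg p + \deg q \ge \max\{\deg p, \deg q\} \ge \deg(p+q) = \deg g$, contradicting $\deg g > \deg h$. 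Hence $L := \F_2(x)[y]/(f)$ is a genuine quadratic extension, and $R$ is an order in $L$ with normalization $\widetilde R$, a Dedekind domain finite over $\F_2[x]$.

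The heart of the argument is the behaviour of the place at infinity $v_\infty$ of $\F_2(x)$ (uniformizer $1/x$) in $L$. I would read this off the Newton polygon of $f$ at $v_\infty$: the relevant lattice points are $(0, -\deg h)$, $(1, -\deg g)$, $(2, 0)$, and the hypothesis $\deg g > \deg h \ge 0$ makes $(1, -\deg g)$ a genuine vertex (it lies strictly below the segment joining the outer two points, since $\deg g > \tfrac12 \deg h$), so the polygon has two distinct integer slopes. Equivalently, after the Artin--Schreier substitution $y = gu$ one gets $u^2 + u = h/g^2$ with $v_\infty(h/g^2) = 2\deg g - \deg h > 0$, so the extension is unramified at infinity with split residue. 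Either way $v_\infty$ splits into two distinct $\F_2$-rational places of $L$ (in particular $\F_2$ is algebraically closed in $L$, as a nontrivial constant extension would leave $v_\infty$ inert), so the normalized boundary at infinity consists of exactly two points, refining the bound of \Cref{rem:uniquenessFundUnit}.

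With the boundary pinned to two points, I would invoke the $S$-unit theorem (the Rosenlicht result cited in \Cref{rem:uniquenessFundUnit}): $\widetilde R^\times / \F_2^\times$ is free abelian of rank $2 - 1 = 1$, so $\widetilde R^\times \cong \Z$ is nontrivial. Finally I would descend from $\widetilde R$ to $R$ through the conductor $\mathfrak c := \{ r \in \widetilde R : r\widetilde R \subseteq R \}$, a nonzero ideal of both rings: the quotient $\widetilde R/\mathfrak c$ is a finite ring, and any $u \in \widetilde R^\times$ congruent to $1$ modulo $\mathfrak c$ satisfies $u, u^{-1} \in R$ and hence lies in $R^\times$. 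Thus $\ker(\widetilde R^\times \to (\widetilde R/\mathfrak c)^\times) \subseteq R^\times$, so $R^\times$ has finite index in $\widetilde R^\times \cong \Z$ and is therefore infinite, in particular nontrivial.

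I expect the main obstacle to be exactly the dichotomy between rank $1$ and rank $0$: rigorously establishing that $v_\infty$ splits into two places rather than one, i.e. that the two naive points $(1:0:0),(0:1:0)$ of \Cref{rem:uniquenessFundUnit} survive as distinct divisorial components of the normalized boundary. The Newton-polygon/Artin--Schreier computation is meant to settle this, but genuine care is needed when $\widetilde R \ne R$, i.e. when the plane model and its normalization differ at infinity, to ensure the descent does not collapse the rank. A secondary, more constructive route that avoids the unit theorem is to expand the ``small'' root $\theta$ (with $v_\infty(\theta) = \deg g - \deg h > 0$) as a Laurent series in $1/x$ and run the function-field continued-fraction (Pell) algorithm; there the hard part is proving eventual periodicity and that some convergent achieves norm exactly $1$ rather than a nonconstant polynomial, which would additionally recover the degree bounds of \Cref{cor:degLowerBound} constructively and yield the fundamental unit explicitly.
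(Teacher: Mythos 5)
You should know at the outset that the paper does \emph{not} prove this statement: it is the paper's central conjecture, supported only by special cases (\Cref{prop:hDividesg,prop:gDividesh2,prop:verify}) and computation, and \Cref{rem:uniquenessFundUnit} explicitly stops at ``trivial or isomorphic to $\Z$'' without deciding which. So there is no paper proof to compare against, and your proposal must stand on its own. Judged that way, it is essentially correct, and if written up carefully it would upgrade the conjecture to a theorem. The steps that check out: irreducibility of $y^2+gy+h$ under the hypotheses (your degree argument is fine); the Artin--Schreier/Hensel computation at infinity (with $u^2+u=h/g^2$ and $v_\infty(h/g^2)=2\deg g-\deg h>0$, the reduction $u(u+1)$ has two simple roots in $\F_2$, so $v_\infty$ splits completely in $L$ -- this is exactly where Type 3 is used, and it is also why Types 1 and 2 are ramified/inert and land on the other side of the dichotomy); the deduction that $\F_2$ is the exact constant field; the identification of the normalization $\widetilde R$ with the ring of $S$-integers, $|S|=2$; and the conductor descent, which is the standard argument that the unit group of a non-maximal order contains a finite-index subgroup of the maximal order's units. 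Your closing worry that the descent could ``collapse the rank'' is unfounded: a finite-index subgroup of $\Z$ is infinite, which is all you need.

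The one genuine gap -- and it is repairable by citing the right theorem -- is the rank lower bound, which is the actual crux and is \emph{not} where you locate the difficulty (the splitting computation is the easy part). Rosenlicht's theorem, as cited in \Cref{rem:uniquenessFundUnit} and \cite{Ros}, gives only the upper bound: rank strictly less than the number of boundary points, i.e.\ rank $\le 1$ here. It cannot give your ``$2-1=1$''; indeed over an algebraically closed field of characteristic $0$ the analogous rank can be $0$ with two boundary points (two points on an elliptic curve whose difference is non-torsion). What you need is the Dirichlet $S$-unit theorem for global function fields over the \emph{finite} field $\F_2$ (e.g.\ Rosen, \emph{Number Theory in Function Fields}, Prop.\ 14.2): $\mathcal{O}_S^\times/\F_q^\times$ is free of rank exactly $|S|-1$, whose proof rests on finiteness of the degree-zero divisor class group -- an input special to finite constant fields and genuinely stronger than anything the paper invokes. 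In fact you need less than the full unit theorem: finiteness of $\mathrm{Pic}^0(L)$ gives some $n\ge 1$ with $n(P_1-P_2)=\mathrm{div}(u)$ principal, where $P_1,P_2$ are the two places at infinity; this $u$ is a nonconstant element of $\widetilde R^\times$, and the conductor argument then puts a power of $u$ into $R^\times$, proving nontriviality. With that substitution your argument is complete; the continued-fraction route you sketch as a fallback (which would be needed to make the proof effective, cf.\ the question following \Cref{cor:degLowerBound}) is not required for existence.
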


If \Cref{conj:type3} is true, then combining it with \Cref{thm:type12} (and the special cases above) would give a complete answer to Question \ref{eq:mainQ} for rings of the form $\F_2[x,y]/(y^2 + gy + h)$.
One may view \Cref{prop:hDividesg} and (the second proof of) \Cref{prop:gDividesh2} as evidence for \Cref{conj:type3}.
Additionally, we have a large amount of computational evidence supporting \Cref{conj:type3}, which we now explain.

\begin{discussion} \label{disc:alg}
Fix a Type 3 minimal representative $(g, h)$, and let $E$ be the set of all squarefree divisors of $g$, excluding $g$ itself.
To compute a fundamental unit, by \Cref{prop:fundEq2} it suffices to find a minimal degree solution $(c, d)$ to $Q(X, Y) = \frac{g}{e}$ for some $e \in E$ (note that $e \ne g$ by \Cref{prop:fundUnitConstraints}(1)).
Now if there is a solution with $\deg d =: n$, then by \Cref{prop:type3degs}, $\deg c = n + \deg h - \deg g$.
For convenience, we write $\overline{g}, \overline{h}$ for $\deg g, \deg h$ respectively.
Thus fixing a degree $n$, we set
\begin{align*}
d = x^n + \sum_{i=0}^{n-1} d_i x^i, \quad c = x^{n + \overline{h} - \overline{g}} + \sum_{i=0}^{n + \overline{h} - \overline{g} - 1} c_i x^i
\end{align*}
where $c_i, d_i$ are unknowns (with values in $\F_2$).
Then fixing $e \in E$ and substituting the above into $Q(c, d) = \frac{g}{e}$ gives a system of quadratic equations in the $c_i, d_i$ (obtained by equating coefficients of powers of $x$), to which we also impose the equations $c_i^2 = c_i, d_i^2 = d_i$ for all $i$, altogether giving an ideal $J_n$ in a polynomial ring $\F_2[c_0, \ldots, c_{n + \overline{h} - \overline{g} - 1}, d_0, \ldots, d_{n-1}]$.
We then check whether $J_n$ is the unit ideal (via Gr\"obner bases): if so, then we move on to the next $e \in E$, and after exhausting $E$, increment the degree $n$ and repeat.
Otherwise, for the least $n$ such that $J_n \ne (1)$, we have found a fundamental unit.
From this $J_n$ there are various ways to obtain the $c_i, d_i$: one way is to record which of $J + (d_0), J + (d_1), \ldots$ equal the unit ideal (this avoids a potentially expensive primary decomposition/radical computation).
\end{discussion}

The biggest limitation of the procedure in \Cref{disc:alg} is that there is as yet no theoretical guarantee of termination.
Despite this, we have implemented it in Macaulay2 \cite{GS} and used it to successfully compute fundamental units, in degrees far beyond those feasible via brute-force search.
Since the ideals $J_n$ are not defined by linear forms, it is perhaps surprising that an algorithm using Gr\"obner bases can be so efficient, especially for large $n$.
However, the special form of the equations in $J_n$ allow for a heuristic partial linearization, i.e. there are explicit operations that can be performed on the generators of $J_n$ which transform many of them into linear forms.
This seems to negate a good deal of the worst-case complexity of the Gr\"obner basis computation, and indeed we observe experimentally that the runtime appears to be polynomial in the degree $n$.

An unexpected consequence of these computations is that the degree of a fundamental unit $(a, b)$ (by which we mean $\deg b$) is often far larger than the degree of $g$ (and hence $h$).
We illustrate this with some examples:

\begin{example} \label{ex:units}
All examples below are of Type 3 minimal representatives.
\begin{enumerate}[\normalfont(i)]
\item $(g, h) = (x^3, x^2+1)$ has fundamental unit $(x, x^2+1)$.
\item $(g, h) = (x^2, x+1)$ has fundamental unit $(x^2 + x + 1, x^3 + x)$.
\item $(g, h) = (x^3, x+1)$ has fundamental unit $(x^8 + x^7 + x^4 + x^2 + 1, x^{10} + x^6 + x^4 + x^2)$.
\item $(g, h) = (x^3+x^2+1, x^2+x)$ has fundamental unit $(x^{15}+x^{12}+x^{9}+x^{8}+x^{7}+x^{5}+x^{4}+x^{3}+1, x^{16}+x^{6})$.
\end{enumerate}
\end{example}

Note that any particular $g$ appears in only finitely many Type 3 minimal representatives.
This makes it possible to systematically check \Cref{conj:type3} up to a given value of $\deg g$, and indeed we have verified:

\begin{proposition} \label{prop:verify}
\Cref{conj:type3} is true for $\deg g \le 4$.
\end{proposition}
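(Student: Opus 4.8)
The plan is to reduce \Cref{conj:type3} in the range $\deg g \le 4$ to a finite verification, and then to dispatch the finitely many cases—most by appeal to earlier results, the remainder by exhibiting explicit units. First I would observe that the hypotheses $\deg g > \deg h$ and $h \ne 0$ of \Cref{conj:type3} are, by case (3) of \Cref{thm:minRep}, exactly the condition that $(g,h)$ be a Type 3 minimal representative with $h \ne 0$; in particular no reduction to minimal representatives is needed, since every such pair is already minimal. For a fixed $g$ of degree $D \ge 1$ the admissible $h$ are the nonzero polynomials of degree $< D$, of which there are $2^D - 1$, while there are $2^D$ polynomials of degree exactly $D$ over $\F_2$. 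Summing $2^D(2^D-1)$ over $D \in \{1,2,3,4\}$ shows there are exactly $2 + 12 + 56 + 240 = 310$ pairs $(g,h)$ to consider, a finite list.

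Next I would trim this list using results already in hand. By \Cref{prop:hDividesg}, every pair with $h \mid g$ satisfies $R^\times \cong \Z$, hence has nontrivial unit group; this immediately disposes of all pairs with $h = 1$ and many others. For each of the remaining pairs (those with $h \nmid g$) it suffices, by \Cref{prop:fundEq}, to exhibit a single nontrivial solution $(a,b)$ with $b \ne 0$ to the fundamental equation $a^2 + abg + b^2h = 1$, equivalently a nontrivial unit $a + by$. Such solutions are produced by the algorithm of \Cref{disc:alg}; once found, each is certified by a direct substitution into \Cref{eq:fundEq}, which is a routine (if tedious) computation over $\F_2[x]$. Crucially, for the conjecture only the \emph{existence} of a nontrivial unit is at stake—not its minimality—so a single certificate per pair completes that case.

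The main obstacle is purely computational: actually locating the units. As \Cref{ex:units}(iii)--(iv) illustrate, the degree $\deg b$ of a fundamental unit can vastly exceed $\deg g$, so a naive brute-force search with degree bounded in terms of $\deg g$ would fail to find them; one must run the search via \Cref{disc:alg} to sufficiently high degree. The one caveat, as noted after \Cref{disc:alg}, is that the algorithm carries no a priori termination guarantee. What nonetheless makes the argument rigorous is that the search terminated with a valid nontrivial unit in each of the finitely many cases, and every output is independently verifiable by substitution; the collection of all $310$ such certificates therefore constitutes a complete (computer-assisted) proof.
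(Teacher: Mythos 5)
Your proposal is correct and matches the paper's approach: the paper's verification is exactly this kind of computer-assisted argument, enumerating the finitely many Type 3 pairs $(g,h)$ with $\deg g \le 4$ (which, as you note, are automatically minimal representatives) and producing an explicit nontrivial unit for each via the algorithm of \Cref{disc:alg}, each certificate being checkable by substitution into \Cref{eq:fundEq}. Your shortcut via \Cref{prop:hDividesg} for the pairs with $h \mid g$ and the explicit count of $310$ cases are harmless refinements, and your caveat about the lack of a termination guarantee is resolved the same way the paper resolves it: the search did terminate in every case.
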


\Cref{table:maxDegUnits} lists, for a given value of $\deg g$, the maximum degree of a fundamental unit over all Type 3 minimal representatives $(g, h)$.
Pairs that achieve the maximum for $\deg g = 2$ and $3$ appear in \Cref{ex:units}, while the maximum for $\deg g = 4$ and $5$ are achieved by $(g, h) = (x^4+x^3+1,x^3+x)$ and $(x^5+x^2+1,x^2+x)$, respectively.

\begin{table}[h]
\centering
\caption{Maximal degree fundamental units}
\begin{tabular}{|c|c|}
\hline
$\deg g$ & $\max \deg b$ \\ \hline
2 & 3 \\
3 & 16 \\
4 & 52 \\
5 & 134 \\ \hline
\end{tabular}
\label{table:maxDegUnits}
\end{table}

Beyond $\deg g = 5$, the degrees of fundamental units continue to proliferate.
A useful test case is the family $(x^m, x+1)$ for $m \ge 2$: from \Cref{prop:fundUnitConstraints} it follows that the fundamental unit satisfies $\nu_x(b) = m - 1$, which along with \Cref{prop:fundEq2}(1) uniquely determines $b_r$.
At the same time, the degrees of fundamental units for this family seem to grow (asymptotically) exponentially with respect to $\deg g$: for $2 \le m \le 7$, the fundamental units of $(x^m, x+1)$ have degrees $3, 10, 33, 36, 79, 378$ (the last of which took $\sim \!\! 1$ hour to compute).

We conclude with some questions:

\begin{enumerate}
\item Given a Type 3 minimal representative $(g, h)$, can one give an (effective) upper bound on the degree of the fundamental unit?
(Note that \Cref{cor:degLowerBound} provides a lower bound, which in the case of \Cref{ex:units}(i) is sharp.)
Such a bound would have both theoretical and practical implications: for the algorithm described in \Cref{disc:alg}, the bulk of the runtime is always spent checking nonexistence in degrees below that of the fundamental unit.

\item When are unit groups of higher degree curves over $\F_2$ trivial?
Although most of the methods in this paper are specific to the rings $\F_2[x,y]/(y^2 + gy + h)$, it is possible that some techniques may generalize.
On a final positive note, we show that rings of the form $\F_2[x,y]/(y^3 + f^3)$, with $0, 1 \ne f \in \F_2[x]$, have trivial unit group: since $(y^3 + f^3) = (y + f) \cap (y^2 + fy + f^2)$ is a decomposition into distinct prime ideals, there is an injection of rings
\begin{align*}
\F_2[x,y]/(y^3 + f^3) &\hookrightarrow \F_2[x,y]/(y + f) \times \F_2[x,y]/(y^2 + fy + f^2) \\
&\cong \F_2[x] \times \F_2[x,y]/(y^2 + fy + f^2)
\end{align*}
and this last ring has trivial unit group by \Cref{prop:gDividesh2}.
\end{enumerate}


\begin{thebibliography}{10}

\bibitem{Che} Chen, Justin. \emph{Surjections of unit groups and semi-inverses}. Journal of Commutative Algebra \textbf{13}, no. 3 (2021): 323-331.

\bibitem{CVZ} Chen, Justin, Sameera Vemulapalli, and Leon Zhang. \emph{Computing unit groups of curves}. Journal of Symbolic Computation \textbf{104} (2021): 236-255.

\bibitem{GS}
Grayson, Daniel R. and Stillman, Michael E. \emph{Macaulay2, a software system for research in algebraic geometry}. Available at \url{http://www.math.uiuc.edu/Macaulay2/}.

\bibitem{Ros} Rosenlicht, Maxwell. \emph{Some rationality questions on algebraic groups}. Annali di Matematica Pura ed Applicata \textbf{43}, no. 1 (1957): 25-50.

\bibitem{VdK} Van der Kulk, Wouter. \emph{On polynomial rings in two variables}. Nieuw Arch. Wiskd.(\textbf{3}) 1 (1953): 33-41.

\end{thebibliography}
\end{document}